\newtheorem{theorem}{Theorem}[section]
\newtheorem{prop}[theorem]{Proposition}
\newtheorem{lemma}[theorem]{Lemma}
\newtheorem{cor}[theorem]{Corollary}
\newtheorem*{theorem*}{Theorem}
\theoremstyle{remark}
\definecolor{jimred}{rgb}{0.75,0.25,0}
\definecolor{jimblue}{rgb}{0.1,0.55,0.1}
\newcommand{\jout}{\bgroup\markoverwith{\textcolor{jimred}{\rule[0.5ex]{2pt}{1pt}}}\ULon}
\title[Pseudo-$F_4$ is isomorphic to $F_4$]{Pseudo-$\boldsymbol{F_4}$ is isomorphic to $\boldsymbol{F_4}$}
\author{James Belk}
\address{School of Mathematics and Statistics, University of Glasgow, University Place, Glasgow, G12~8QQ, Scotland.}
\email{\href{mailto:jim.belk@glasgow.ac.uk}{jim.belk@glasgow.ac.uk}}
\thanks{The first author has been partially supported by the National Science Foundation under Grant No.~DMS-1854367 during the creation of this paper.}
\author{Liam Stott}
\address{School of Mathematics and Statistics, University of St Andrews, North Haugh, St Andrews, KY16 9SS, Scotland.}
\email{\href{mailto:lks4@st-andrews.ac.uk}{lks4@st-andrews.ac.uk}}
\thanks{The second author has been partially funded by the EPSRC through the University of St Andrews doctoral training program during the creation of this paper.}
\date{} 
\newcommand{\nbumps}{\mathfrak{C}_n}
\DeclareMathOperator{\supt}{supt}
\DeclareMathOperator{\src}{src}
\DeclareMathOperator{\dest}{dest}
\DeclareMathOperator{\Homeo}{Homeo}
\definecolor{myred}{rgb}{0.85,0,0}
\definecolor{myblue}{rgb}{0,0,0.75}
\begin{document}

\begin{abstract}
We prove that the ``pseudo-$F_4$'' group is isomorphic to~$F_4$, answering a question of Brin.  Both of these groups can be described as fast groups of homeomorphisms of the interval generated by bumps, as introduced by Bleak, Brin, Kassabov, Moore, and Zaremsky.  The proof uses a representation of fast groups as Guba--Sapir diagram groups in order to leverage known results on isomorphisms of diagram groups.
\end{abstract}

\maketitle

\section{Introduction}

If $f$ is a homeomorphism of $[0,1]$, an \textbf{orbital} for $f$ is a maximal open interval on which $f$ has no fixed points.  A homeomorphism $f\colon [0,1]\to [0,1]$ is called a \textbf{bump} if it is orientation-preserving and has exactly one orbital.  Such a bump is \textbf{positive} if $f(x)\geq x$ for all $x\in[0,1]$.

In a 2019 paper \cite{fast}, Bleak, Brin, Kassabov, Moore, and Zaremsky proved that under a certain condition, the isomorphism type of a group generated by finitely many bumps depends only on the way in which the orbitals overlap. They further extend this condition, and their result, to finite sets of orientation-preserving homeomorphisms with finitely many orbitals. They refer to this condition as being ``fast'', and the resulting groups are ``fast groups''.  Using fast groups, Bleak, Brin, and Moore went on to construct a transfinite sequence $\{G_\xi\}_{\xi<\varepsilon_0}$ of elementary amenable subgroups of Thompson's group~$F$ that is linearly ordered by the embeddability relation and has the property that for any $n\in\mathbb{N}$ and any ordinal $0<\alpha<\varepsilon_0$, the group $G_\xi$ for $\xi=\omega^{(\omega^\alpha)\cdot (2^n)}$ is elementary amenable of class $\omega\cdot \alpha + n + 2$ \cite{EA}.

The class of fast groups generated by bumps is defined as follows. If $f$ is a positive bump with orbital~$(a,b)$, a \textbf{pair of feet} for $f$ are intervals of the form $L= (a,c)$ and $R=(f(c),b)$, where $c$ is some chosen point in $(a,b)$.  A set $f_1,\ldots,f_n$ of positive bumps is \textbf{fast} if we can choose a pair of feet $L_i,R_i$ for each $f_i$ so that the intervals $L_1,R_1,\ldots,L_n,R_n$ are pairwise disjoint.  In this case, we refer to the group $\langle f_1,\ldots,f_n\rangle$ as a \textbf{fast group}.

The following is a special case of the remarkable theorem proved by Bleak, Brin, Kassabov, Moore, and Zaremsky.  The proof resembles that of the classical ping-pong lemma for free groups.

\begin{theorem}[Ping-Pong for Fast Groups \cite{fast}]\label{thm:PingPong}
Let\/ $\{f_1,\ldots, f_n\}$ be a fast set of positive bumps with pairwise disjoint feet\/ $\{L_i,R_i\}_{i=1}^n$.  Then the isomorphism type of\/ $\langle f_1,\ldots,f_n\rangle$ depends only on the left-to-right order of the intervals\/ $\{L_i,R_i\}_{i=1}^n$.
\end{theorem}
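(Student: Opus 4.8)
The plan is to run a ping-pong argument in the spirit of Bleak, Brin, Kassabov, Moore, and Zaremsky. Suppose $\{f_i\}_{i=1}^n$ and $\{g_i\}_{i=1}^n$ are fast sets of positive bumps with pairwise disjoint feet $\{L_i,R_i\}_{i=1}^n$ and $\{L_i',R_i'\}_{i=1}^n$, and that the $2n$ intervals $L_1,R_1,\ldots,L_n,R_n$ occur in the same left-to-right order as $L_1',R_1',\ldots,L_n',R_n'$. I want to show the bijection $f_i\leftrightarrow g_i$ extends to an isomorphism $\langle f_1,\ldots,f_n\rangle\to\langle g_1,\ldots,g_n\rangle$. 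Since this amounts to equality of the kernels of the two surjections from the free group $F_n$, and the hypothesis is symmetric in the two systems, it is enough to prove a single implication, valid for any such pair of systems: for every word $w$, if $w(f_1,\ldots,f_n)=\id$ then $w(g_1,\ldots,g_n)=\id$. I would get this by exhibiting, for an arbitrary fast system of positive bumps, a combinatorial test — depending only on the left-to-right order of the feet — for whether a word represents the identity.

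To set up the dynamics, write the orbital of $f_i$ as $(a_i,b_i)$, so $L_i=(a_i,c_i)$ and $R_i=(f_i(c_i),b_i)$ for some $c_i$, and let $G_i=(c_i,f_i(c_i))$ be the \emph{gap}, so that $\overline{L_i}$, $\overline{G_i}$, $\overline{R_i}$ tile $[a_i,b_i]$. The first step is a dynamical lemma: $f_i$ fixes $[0,1]\setminus(a_i,b_i)$ pointwise and pushes every point of $(a_i,b_i)$ to the right, and for every $k\ge 1$ the iterate $f_i^k$ sends $\overline{R_i}$ into $\overline{R_i}$, sends $\overline{G_i}$ into $\overline{R_i}$, and sends $\overline{L_i}$ onto $[a_i,f_i^k(c_i)]$, an interval that exhausts $[a_i,b_i]$ as $k\to\infty$; the mirror statements hold for $f_i^{-k}$ with $L_i$ and $R_i$ exchanged. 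The second step is the structural consequence of fastness: for $i\ne j$, each of $L_j$ and $R_j$ is disjoint from both $L_i$ and $R_i$, hence lies entirely inside $\overline{G_i}$ or entirely outside $(a_i,b_i)$ on one definite side, and which alternative occurs is visible in the left-to-right order of the feet — so this order already determines, for every pair $i,j$, the dynamical relationship between $f_i$ and $f_j$ (disjoint orbitals, one orbital nested in the other, or overlapping orbitals). With these in hand, I would track the itinerary of a test point $p$ — an endpoint of a foot, chosen according to the reduced form of $w$ — as the syllables of $w=f_{i_m}^{\epsilon_m}\cdots f_{i_1}^{\epsilon_1}$ are applied in turn: at each stage $p$ lies in a region of the next active bump that is pinned down by the feet order together with the regions visited so far, so whether $w(\bar f)$ returns $p$ to its starting position is itself a function of the feet order alone, and picking $p$ appropriately would force $w(\bar f)(p)\ne p$ whenever $w$ is reduced and nonempty.

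The hard part will be turning this into an honest decision procedure. I expect this to require: (i) specifying a combinatorial reduction of words from the relations that are ``obvious'' from the feet order — free cancellation, commuting $f_i$ past $f_j$ when their orbitals are disjoint, and the further commutations forced by fastness, such as $[f_if_jf_i^{-1},f_k]=\id$ when an iterate of $f_i$ carries the orbital of $f_j$ clear of the orbital of $f_k$; (ii) proving these relations suffice to reduce every identity-representing word to the empty word; and (iii) proving the converse via the itinerary analysis above. Step (ii) — showing the listed relations are complete — is where I expect the real difficulty to lie, and it is also where fastness is indispensable: without disjoint feet a test point could straddle the boundary between a foot and a gap of the next active bump, and the itinerary would stop being combinatorially predictable. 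Once all of this is carried out, the whole procedure refers to the bumps only through the left-to-right order of their feet, so it assigns the same truth value to ``$w(\bar f)=\id$'' and to ``$w(\bar g)=\id$''; that is exactly the implication needed, and the theorem follows.
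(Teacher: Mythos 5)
First, a point of comparison: the paper does not prove this theorem at all --- it is imported verbatim from Bleak--Brin--Kassabov--Moore--Zaremsky \cite{fast} and used as a black box --- so there is no in-paper argument to measure your attempt against. Judged on its own terms, your proposal gets the framing right: by symmetry it suffices to show that the truth value of ``$w(f_1,\ldots,f_n)=\id$'' is determined by the left-to-right order of the feet, and the dynamical observations you list (the behaviour of $f_i^{\pm k}$ on $L_i$, $G_i$, $R_i$, and the fact that each foot of $f_j$ sits either inside the closed gap $\overline{G_i}$ or entirely outside the orbital of $f_i$, with the alternative visible in the feet order) are exactly the ping-pong ingredients used in \cite{fast}.

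The difficulty is that the proposal stops where the proof has to begin. The entire content of the theorem is your step (ii): one must prove that some explicit, order-determined reduction procedure is \emph{complete}, i.e.\ that every word surviving the combinatorial reductions genuinely moves a point. Your claim that ``picking $p$ appropriately would force $w(\bar f)(p)\ne p$ whenever $w$ is reduced and nonempty'' is false for free reduction --- $f_1f_2f_1^{-1}f_2^{-1}$ is freely reduced, nonempty, and trivial when the orbitals are disjoint --- so ``reduced'' must mean reduced with respect to your enlarged relation set, and the sufficiency of that set is precisely what is asserted rather than argued. The route actually taken in \cite{fast} sidesteps the need to identify a complete set of relations: one defines the \emph{local reduction} of a word at a point (a notion reproduced in Section~2 of this paper), shows that the local reduction at a marker is computable purely from the dynamical diagram, and proves the key ping-pong lemma that a nonempty locally reduced word moves the corresponding marker (compare \cite[Lemma~5.7]{fast}, invoked later in this paper). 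Some lemma of that strength is unavoidable, and until it is supplied your text is a plausible plan rather than a proof.
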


For example, suppose $\{f_1,f_2\}$ is a fast set with pairwise disjoint feet $L_1,R_1,L_2,R_2$.  Then up to symmetry there are exactly three possibilities:
\begin{enumerate}
    \item If $L_1<R_1<L_2<R_2$, then $f_1$ and $f_2$ have disjoint supports and hence commute, so $\langle f_1,f_2\rangle$ is isomorphic to $\mathbb{Z}\times \mathbb{Z}$.\smallskip
    \item If $L_1 < L_2 < R_2 < R_1$, then the orbital for $f_2$ lies in a fundamental domain for the action of $f_1$, so $\langle f_1,f_2\rangle$ is isomorphic to the wreath product $\mathbb{Z}\wr\mathbb{Z}$.\smallskip
    \item If $L_1<L_2<R_1<R_2$, then $\langle f_1,f_2\rangle$ is isomorphic to Thompson's group~$F$.
\end{enumerate}

\begin{figure}
    \renewcommand\thesubfigure{\arabic{subfigure}}
    \begin{center}
        \setlength{\unitlength}{3mm}
        \begin{subfigure}{0.3\textwidth}
            \centering
            \begin{picture}(10,5)
                \multiput(0,0)(4,0){2}{\circle*{0.5}}
                \multiput(5,0)(4,0){2}{\circle*{0.5}}
                \qbezier(0,0)(2,3)(4,0)
                \qbezier(5,0)(7,3)(9,0)
            \end{picture}
            \caption{$\mathbb{Z}\times\mathbb{Z}$}
        \end{subfigure}
        \hfill
        \begin{subfigure}{0.3\textwidth}
            \centering
            \begin{picture}(10,5)         
                \multiput(0,0)(8,0){2}{\circle*{0.5}}
                \multiput(2,0)(4,0){2}{\circle*{0.5}}
                \qbezier(0,0)(4,6)(8,0)
                \qbezier(2,0)(4,3)(6,0)
            \end{picture}
            \caption{$\mathbb{Z}\wr\mathbb{Z}$}
        \end{subfigure}
        \hfill
        \begin{subfigure}{0.3\textwidth}
            \centering
            \begin{picture}(10,5)
                \multiput(0,0)(6,0){2}{\circle*{0.5}}
                \multiput(2,0)(6,0){2}{\circle*{0.5}}
                \qbezier(0,0)(3,4.5)(6,0)
                \qbezier(2,0)(5,4.5)(8,0)
            \end{picture}
            \caption{$F$}
        \end{subfigure}
    \end{center}
    \caption{All possible support arrangements for fast sets of two bumps up to equivalence.}
    \label{fig:2bumps}
\end{figure}
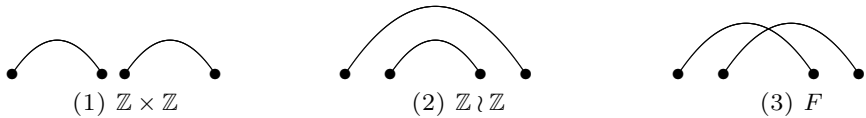

These three possibilities are shown in Figure~\ref{fig:2bumps}. The vertices represent the feet of the fast set while the arcs represent the bumps, each of which are connected to the vertices corresponding to the pair of feet for that particular bump. Importantly, the vertices are linearly ordered according to the left-to-right order on the corresponding feet.

In this paper, we connect the class of fast groups to the diagram groups introduced by Guba and Sapir~\cite{diagrams}.  There is one diagram group $\Delta(\mathcal{P},w)$ for each semigroup presentation $\mathcal{P}$ and each word $w$ in the generators of $\mathcal{P}$, with elements of the group being Van Kampen diagrams over $\mathcal{P}$ for the relation $w=w$.  The class of diagram groups also contains Thompson's group~$F$, with $F\cong \Delta(\mathcal{P},x)$ for $\mathcal{P}$ the semigroup presentation $\langle x\mid x^2=x\rangle$.  Our first main result is the following.

\begin{theorem}\label{thm:diagram}
Let\/ $\{f_1,\ldots,f_n\}$ be a fast set of positive bumps.  Then there exists a finite semigroup presentation $\mathcal{P}$ and word $w$ such that the fast group\/ $\langle f_1,\ldots,f_n\rangle$ is isomorphic to the diagram group\/ $\Delta(\mathcal{P},w)$.
\end{theorem}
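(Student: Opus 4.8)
The plan is to read a finite semigroup presentation $\mathcal{P}$ and a word $w$ off of an explicit, highly structured realization of the fast set, and then to identify $\Delta(\mathcal{P},w)$ with $\langle f_1,\dots,f_n\rangle$ by realizing the diagram group itself as a fast group with the same feet order and invoking Theorem~\ref{thm:PingPong}. The guiding example is Thompson's group $F\cong\Delta(\langle x\mid x^2=x\rangle,x)$: the relation $x^2=x$ records the fact that, in a realization of $F$ by two bumps, each orbital is swept out by the $\langle f_i\rangle$-translates of a single fundamental domain, so that a cell of a subdivision of $[0,1]$ may be subdivided indefinitely. I want $\mathcal{P}$ and $w$ so that a diagram over $\mathcal{P}$ for $w=w$, after being sliced into elementary rewriting moves, is exactly a loop of subdivisions of $[0,1]$: the word $w$ will encode the initial subdivision cut at the endpoints of the feet, subdivision-type relations $a=bc$ (generalizing $x^2=x$) will create new fundamental domains, and one further relation per bump will record how $f_i$ transports cells across its orbital.

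First I would use Theorem~\ref{thm:PingPong} to replace the $f_i$ by convenient representatives with the same left-to-right order of feet: I fix a finite partition of $[0,1]$ into cells whose endpoints include all $4n$ endpoints of the feet, and I choose each $f_i$ to act self-similarly with respect to it---piecewise-linear with breakpoints adapted to the way the orbitals nest and interleave, carrying each cell of a suitable common refinement onto a union of cells, and repeating the cell pattern under $f_i$ along $L_i$ and along $R_i$ so that the orbital of $f_i$ is swept out by $\langle f_i\rangle$-translates of one fundamental block. This is the delicate step: the partition must be simultaneously compatible with all $n$ orbitals, and the fundamental block of a more deeply nested bump must sit inside a single cell relative to the coarser structure. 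Reading this picture off, I would let $\mathcal{P}=\langle\Sigma\mid\mathcal{R}\rangle$ have one generator for each cell type occurring in arbitrarily fine refinements---finitely many, by self-similarity---and relations of two kinds: ``subdivide a cell into its children'' and, for each $i$, ``the cell pattern along the left edge of the orbital of $f_i$ equals the one obtained after applying $f_i$.'' The word $w$ lists the generators of the initial partition in order; $\Sigma$ and $\mathcal{R}$ are finite because there are only $2n$ feet and only finitely many orbital-containment relations among them.

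It then remains to prove $\Delta(\mathcal{P},w)\cong\langle f_1,\dots,f_n\rangle$. As for $F$, each elementary rewriting move is a piecewise-linear homeomorphism between the two subdivisions it relates, so slicing a diagram for $w=w$ produces a homeomorphism of $[0,1]$; this gives a homomorphism $\Phi\colon\Delta(\mathcal{P},w)\to\Homeo^+([0,1])$, well-defined on dipole-equivalence classes, whose image is generated by copies of $f_1,\dots,f_n$ and is therefore a fast set of bumps with feet in exactly the prescribed left-to-right order---so $\im\Phi\cong\langle f_1,\dots,f_n\rangle$ by Theorem~\ref{thm:PingPong}. The remaining point, and the second main obstacle, is injectivity of $\Phi$: a diagram acting trivially on $[0,1]$ must be dipole-trivial. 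This is the diagram-group analogue of the faithfulness of the action of $F$ on $[0,1]$, and should follow either from a confluence/normal-form analysis of the subdivision moves together with the ping-pong dynamics of the $f_i$, or from known faithfulness results for actions of such diagram groups on the interval.

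The two hard parts are thus: choosing the finite list of self-similar moves so that they generate \emph{precisely} the right rewriting system---rich enough to realize every homeomorphism in the fast group (e.g.\ all of $F$ on an interleaved pair of orbitals) yet introducing no spurious elements, which is subtle exactly when orbitals interleave rather than nest---and establishing the injectivity of $\Phi$. Everything else (well-definedness of $\Phi$ on dipole classes, finiteness of $\mathcal{P}$, and the identification of $\im\Phi$ as a fast group of the prescribed type) I expect to be routine bookkeeping once the self-similar realization in Step~1 is set up carefully.
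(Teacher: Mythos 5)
Your overall strategy is recognisably the same as the paper's: normalise the bumps via Theorem~\ref{thm:PingPong}, read a finite semigroup presentation off an adapted partition of the support (the paper's ``canonical partition'', with one generator per foot or gap and two relations per bump recording that $b$ carries its source onto source-plus-fundamental-domain and fundamental-domain-plus-destination onto its destination), and then identify the resulting diagram group with $\langle f_1,\ldots,f_n\rangle$. The direction of your map is reversed (the paper builds $\delta\colon\langle B\rangle\to\Delta(\mathcal{P},w)$ rather than an evaluation map out of the diagram group), which is harmless. The problem is that the two steps you defer are not peripheral: they constitute essentially the entire content of Section~\ref{sec:IsomorphismBumpsDiagrams}, and for neither do you supply an argument or a citable result.

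Concretely: (i) your claim that $\im\Phi$ is ``generated by copies of $f_1,\ldots,f_n$'' presupposes that every $(w,w)$-diagram over $\mathcal{P}$ is equivalent to a concatenation of the $n$ designated generator diagrams. A generic diagram composes individual cells (half-bumps, applied at varying depths of refinement) in ways that do not visibly assemble into whole bumps, and unlike the $F$ case there is no prior characterisation of $\langle f_i\rangle$ as all PL homeomorphisms with prescribed breakpoints and slopes to fall back on --- that description already fails for the nested configuration giving $\mathbb{Z}\wr\mathbb{Z}$. The paper proves this generation statement via Lemma~\ref{lemma:vertex}, a lengthy case analysis of the possible edge and cell labellings around a vertex of a reduced diagram, which is then used to peel generator diagrams off the bottom of an arbitrary diagram; you file this under ``routine bookkeeping'', but it is the hardest part of the proof. (ii) Injectivity of $\Phi$ --- a diagram acting trivially on $[0,1]$ is dipole-trivial --- is the other half, and ``known faithfulness results'' will not deliver it off the shelf for these presentations. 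The paper establishes the corresponding facts by relating maximal paths in the strand diagram of $w\delta$ to the simply local reductions $w_x$ of $w$ at points $x$ in the orbit of the canonical marking, invoking the uniqueness of locally reduced words acting on points of finite history from \cite{fast}, and adding a separate dipole-cancellation argument showing that a freely reduced nontrivial word leaves at least two cells surviving. Until you supply arguments for (i) and (ii), the proposal is a plan rather than a proof.
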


Indeed, we give a simple algorithm to construct $\mathcal{P}$ and $w$ from the left-right order of the feet of $f_1,\ldots,f_n$.

As an application, we solve a question of Brin on the classification of fast groups~\cite[Question~108]{ober}.  In general, the class of groups generated by $n$ fast bumps includes many wreath products and direct products, as well a collection of groups $\mathfrak{C}_n$ that are neither of those.  For $n=2$, every group in the class $\mathfrak{C}_2$ is isomorphic to Thompson's group~$F$.  For $n=3$, Brin, Bleak, and Moore showed that every group in $\mathfrak{C}_3$ is isomorphic to the $3$-ary Thompson group~$F_3$ \cite[p.~1610]{ober}. For $n=4$ they showed that the groups in $\mathfrak{C}_4$ have at most two isomorphism classes, one of which is the $4$-ary Thompson group~$F_4$, and the other of which they named ``pseudo-$F_4$'' (see Figure~\ref{fig:FourBumps}).  However, they were not able to determine whether pseduo-$F_4$ is isomorphic to~$F_4$.
\begin{figure}[b]
    \centering
    \includegraphics{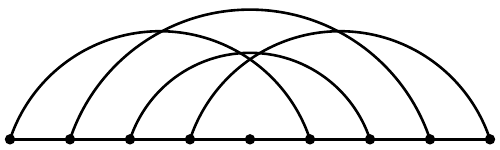}
    \caption{A fast set of four bumps that generate the group \mbox{pseudo-$F_4$}}
    \label{fig:FourBumps}
\end{figure}

Here we use the theory of diagram groups to prove the following.

\begin{theorem}
The group psuedo-$F_4$ is isomorphic to $F_4$.
\end{theorem}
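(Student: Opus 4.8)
The plan is to realise pseudo-$F_4$ as a Guba--Sapir diagram group via Theorem~\ref{thm:diagram}, and then to recognise that diagram group as $F_4$ by transforming its defining semigroup presentation into a standard one, using the known invariance of diagram groups under elementary transformations of presentations. Since, by Theorem~\ref{thm:PingPong}, the isomorphism type of pseudo-$F_4$ depends only on the left-to-right order of the eight feet depicted in Figure~\ref{fig:FourBumps}, there is no ambiguity in what ``the group pseudo-$F_4$'' means.

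First I would run the algorithm underlying Theorem~\ref{thm:diagram} on the order of the feet in Figure~\ref{fig:FourBumps}; this is a short mechanical computation and yields an explicit finite semigroup presentation $\mathcal{P}$ and word $w$ such that pseudo-$F_4$ is isomorphic to $\Delta(\mathcal{P},w)$. For the target side, recall from the introduction that $F \cong \Delta(\langle x \mid x^2 = x\rangle, x)$; analogously $F_4 \cong \Delta(\langle x\mid x^4 = x\rangle, x)$, and one can reconfirm this by running the same algorithm on the ``chain'' configuration $L_1 < L_2 < L_3 < L_4 < R_1 < R_2 < R_3 < R_4$, which generates $F_4$, and checking that the presentation it produces reduces to $\langle x\mid x^4 = x\rangle$.

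The heart of the matter is then a purely combinatorial claim: there is a finite sequence of elementary transformations carrying $(\mathcal{P}, w)$ to $\bigl(\langle x\mid x^4 = x\rangle, x\bigr)$, where by an elementary transformation I mean one of the moves that Guba and Sapir show induce isomorphisms of diagram groups --- renaming a generator; adjoining or deleting a generator $a$ together with a relation $a = u$ in which $a$ does not occur, substituting for $a$ in the distinguished word as needed; and adjoining or deleting a relation that is already a consequence of the remaining relations, hence derivable by a diagram over them. Composing the resulting isomorphisms gives
\[
\text{pseudo-}F_4 \;\cong\; \Delta(\mathcal{P},w) \;\cong\; \Delta\bigl(\langle x\mid x^4 = x\rangle, x\bigr) \;\cong\; F_4 .
\]

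I expect the last step to be the genuine obstacle. One must both locate the correct sequence of transformations and verify that each is of a permitted type --- in particular that every deleted relation really is a semigroup-theoretic consequence of the others (so that an actual diagram realising it exists, rather than it being only ``morally'' redundant), and that the distinguished word is tracked correctly throughout. If the obvious eliminations do not land exactly on $\langle x\mid x^4 = x\rangle$, the fallback is to exhibit a common third presentation to which both reduce, or to work at the level of Squier complexes and build an explicit homotopy equivalence between the relevant connected components; either way, the theorem follows once the presentations have been matched up.
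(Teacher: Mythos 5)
Your overall strategy is the paper's: realise pseudo-$F_4$ as $\Delta(\mathcal{P},w)$ via Theorem~\ref{thm:diagram} and then massage $(\mathcal{P},w)$ into a known diagram-group presentation of $F_4$. But there are two genuine problems. First, your list of ``elementary transformations'' includes adjoining or deleting a relation that is a consequence of the remaining ones. That move preserves the isomorphism type of the semigroup but in general \emph{changes} the diagram group --- this is precisely the warning issued after Theorem~\ref{thm:moves} --- so any sequence of transformations that invokes it proves nothing. The moves actually available are the two of Theorem~\ref{thm:moves}: substituting one relation into another (replacing an occurrence of $u_i$ by $v_i$ inside a different relation), and deleting a generator $x$ that occurs in exactly one relation $x=w$ with $x$ not occurring in $w$. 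Your toolkit omits the substitution move, which is the workhorse of the entire computation, and your generator-deletion move is only legal after substitutions have removed every other occurrence of that generator.

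Second, the ``heart of the matter'' that you defer is the entire content of the theorem, and the target you name is not the one that can actually be reached. Starting from the eight-generator presentation that Theorem~\ref{thm:detailed} produces for the configuration of Figure~\ref{fig:FourBumps}, the legal moves lead (after a specific chain of substitutions and eliminations) to
\[
\bigl\langle A,B,C,D,E \;\bigm|\; A=ABCD,\; B=BCDB,\; C=CDBC,\; D=DBCD,\; E=DBCE\bigr\rangle
\]
with base word $ABCE$, not to $\langle x\mid x^4=x\rangle$ with base $x$. Identifying the diagram group of this five-generator presentation with $F_4$ is a separate, substantive step: it rests on analysing the partial action of $F_4$ on standard $4$-adic intervals, observing that this action preserves the type $i\bmod 3$ of an interval $[i/4^n,(i+1)/4^n]$ and that the type-$0$ intervals split into several orbits, which yields exactly such multi-letter presentations of $F_4$. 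You do anticipate needing ``a common third presentation'', but without exhibiting it, proving its diagram group is $F_4$, and writing down the explicit sequence of legal moves connecting it to $\mathcal{P}$, the argument contains no more than the (correct) choice of strategy.
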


Thus all of the groups in $\mathfrak{C}_4$ are isomorphic.  The proof involves explicit manipulation of semigroup presentations and is specific to the two groups in question, though similar calculations might be useful for larger values of~$n$. For $n\geq 5$, it remains an open question whether all of the groups in $\nbumps$ are isomorphic~\cite[Question~72]{ober}.  Even in the $n=5$ case, there at most four isomorphism classes in $\mathfrak{C}_5$, and it is not clear how to use the methods here to determine whether or not these classes are distinct. 

This paper is organized as follows.  In Section~\ref{sec:Preliminaries} we further describe geometrically fast groups, and briefly recall the definition of a diagram group.  In Section~\ref{sec:IsomorphismBumpsDiagrams}, we prove that every group of fast bumps is isomorphic to a diagram group of a certain type.  Finally, in Section~\ref{sec:IsomorphismF4PF4}, we use the theory of diagram groups to prove that pseudo-$F_4$ is isomorphic to~$F_4$.

\subsection*{Acknowledgements}
The authors would like to thank Collin Bleak, Matthew Brin, Justin Moore, and James Hyde for many helpful conversations and suggestions.

\section{Preliminaries}\label{sec:Preliminaries}

\subsection{Geometrically fast groups}

Here we recall the definition of fast groups given in~\cite{fast} in slightly more detail. See the introduction for definitions of a bump and an orbital, among others.

Let $\Homeo_+(I)$ be the group of all orientation-preserving homeomorphisms of the unit interval $I=[0,1]$. If $f\in\Homeo_+(I)$, the \textbf{support} of $f$ is the set $\supt(f)=\{x\in I\ |\ xf\neq x\}$. Given a collection of bumps $B=(b_i)\subset\Homeo_+(I)$ we may choose a collection of points $(x_i)$ such that $x_i\in \supt(b_i)$. We will refer to such a collection as a \textbf{marking} of $B$ and each element of the collection as a \textbf{marker}. Given a marker $x_i$ of a positive bump $b_i$ we define the \textbf{source} and \textbf{destination} of $b_i$ to be the intervals $\src(b_i)=(a,x_i)$ and $\dest(b_i)=[x_ib_i,c)$ respectively, where $\supt(b_i)=(a,c)$, and collectively refer to these intervals as the \textbf{feet of the bump $b_i$}. If $b_i$ is \textbf{negative} (i.e. not positive) then we define the source and destination of $b_i$ to be the destination and source of $b_i^{-1}$ respectively. We say a bump $b_i$ is \textbf{isolated} if its support doesn't contain any endpoints of bumps in $B$.

Finally, we say $B$ is a \textbf{geometrically fast} (or \textbf{fast} for short) set if there exists a marking for $B$ such that the feet defined by this marking are pairwise disjoint. Further, given an arbitrary set $A\subseteq \Homeo_+(I)$, if the set $B=\{a|_{O}\ |\ a\in A \text{, } O \text{ is an orbital of } a\}$ is geometrically fast then we say $A$ is geometrically fast as well, and in such a case the feet of $A$ are the feet of $B$. If a group of homeomorphisms $G$ is generated by a geometrically fast set then we also say the group $G$ is geometrically fast, or simply fast, and if all the functions in the set are bumps we may call $G$ a fast bump group, though for our purposes we will usually simply refer to them as fast groups. When considering a generating set for a geometrically fast bump group we will always assume the bumps are positive since this does not affect the isomorphism type of the group.

Given an ordered set $\mathcal{T}=(b_i)_i\subseteq B$ of bumps we say $\mathcal{T}$ is a \textbf{transition chain} if $a_i<a_{i+1}<c_i<c_{i+1}$ for all $i$ where $\supt(b_i)=(a_i,c_i)$. If, in addition, each interval $(a_{i+1},c_i)$ does not contain any endpoints of $B$ then we say $\mathcal{T}$ is \textbf{stretched}. Notice that the maximal stretched transition chains partition $B$. If $B$ is geometrically fast then we denote by $S(\mathcal{T})$ and $D(\mathcal{T})$ the sets of sources and destinations of the transition chain $\mathcal{T}$ respectively. These sets may inherit the ascending order from $\mathcal{T}$, which we denote by $\overrightarrow{S(\mathcal{T})}$ and $\overrightarrow{D(\mathcal{T})}$, or its inverse, denoted $\overleftarrow{S(\mathcal{T})}$ and $\overleftarrow{D(\mathcal{T})}$. We denote the minimal element of $\overrightarrow{S(\mathcal{T})}$ by $s(\mathcal{T})$ and the maximal element of $\overrightarrow{D(\mathcal{T})}$ by $d(\mathcal{T})$, and we may say the transition chain $\mathcal{T}$ \textbf{starts at} $s(\mathcal{T})$ and \textbf{ends at} $d(\mathcal{T})$. It will be convenient to consider the empty transition chain $\mathcal{T}=\emptyset$ as starting at every source of $B$ and ending at every destination of $B$.

Given a geometrically fast set $A$ we say that a word $w\in (A^\pm)^*$ is \textbf{simply locally reduced at a point $x\in I$} if for any prefix of the form $ua$ for $a\in A^\pm$ we have $xu\neq xua$. Relatedly, given a word $w\in (A^\pm)^*$ we define the \textbf{simply local reduction} $w_x$ of $w$ at $x\in I$ to be the word obtained by deleting $a$ from $w$ whenever $xu=xua$ for some prefix $ua$ of $w$. We denote the set of simply local reductions of a word by $L(w)=\{w_x\ |\ x\in I\}$. For a word $w$ we denote its free reduction by $w^\vee$; additionally, if a word is simply locally reduced at $x$ and freely reduced we say it is \textbf{locally reduced at $x$} and we say $(w_x)^\vee$ is a \textbf{local reduction} of $w$ while denoting its \textbf{set of local reductions} by $L^\vee(w)$.

We will require the notion of a history of a point. Given a fast set $A$ with a marking we say a point $x\in I$ outside of the feet of $A$ has \textbf{trivial history} and define $\bar{x}=\{a\ |\ x\in \supt(a)\}$. If $x\in I$ is a point in a foot of $A$ we define its \textbf{history} to be the set $\eta(x)$ containing
    
\begin{itemize}
    \item $u\in (A^\pm)^*$ where there is a $t\in I$ such that $tu=x$, $u$ is locally reduced at $t$ and $t$ is not in the source of the first letter of $u$
    \item $au\in (A^\pm)^*$ where $a\in\bar{t}$ for a $t\in I$ with trivial history, $tu=x$ and $u$ is locally reduced at $t$.
\end{itemize}

Given a fast set $A\subseteq \Homeo_+(I)$ one can define a directed, edge-labelled, vertex-ordered graph which represents what will turn out to be the essential features of $A$ for our purposes. This graph has a vertex corresponding to each foot of $A$ and for each $a\in A$ there is an edge (with label $a$) from the vertex corresponding to the source of $a$ to the vertex corresponding to the destination of $a$. Crucially, the vertices are ordered isomorphically to the ordering on the feet induced from the usual order on $I$. We refer to this as the \textbf{dynamical diagram} $D_A$ of $A$ (unrelated to the diagrams which make up diagram groups). Examples of dynamical diagrams are shown in Figure~\ref{fig:2bumps} and Figure~\ref{fig:FourBumps}.

\subsection{Diagram groups}

\subsubsection{Diagrams}

Diagrams and diagram groups were originally explored in the PhD thesis of Kilibarda~\cite{kili} and the theory around these groups has been developed chiefly by the work of Guba and Sapir (e.g.~\cite{diagrams}).  See also Genevois's recent survey on diagram groups~\cite{Gen}.

Diagrams are perhaps best understood as two-dimensional analogues of words, and diagram groups, similarly, as analogous to free groups. To define a diagram, we may start with a semigroup presentation 
\[
\langle x_1,\ldots,x_m \mid u_1=v_1,\;\ldots,\;u_n=v_n\rangle
\]
and consider a word $w=x_{i_1}\ldots x_{i_k}$ over this presentation. This word can be represented as an edge-labelled oriented plane graph by a positive path $\Delta_0$ with $k$ edges with the $j$th edge from left to right labelled $x_{i_j}$. If $w$ contains a subword $u_i$ then we can replace it with $v_i$ to obtain a word $w'$ equivalent to $w$. This can be represented in the graph by connecting a positive path labelled $v_i$ beneath the subpath labelled $u_i$ to obtain a new graph $\Delta_1$. This can be repeated for $\Delta_1$, and so on, indefinitely---each graph $\Delta_0,\Delta_1,\ldots$ obtained is a diagram over the semigroup presentation.

As an example, consider the semigroup presentation $\langle a,b\ |\ ab=ba\rangle$ and the derivation \[aabb\rightarrow abab\rightarrow baab\rightarrow baba\rightarrow bbaa\] from the word $aabb$ to $bbaa$. The diagram defined by this derivation over this presentation is shown in Figure~\ref{f:diagram}.

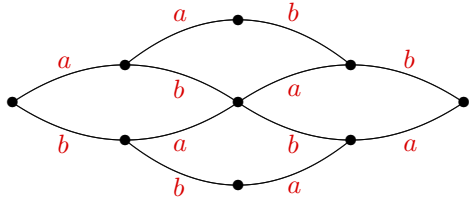
\begin{figure}
    \centering
        \setlength{\unitlength}{2mm}
        \begin{picture}(30,15)
            \multiput(7.5,10)(15,0){2}{\circle*{0.7}}
            \qbezier(7.5,10)(15,16)(22.5,10)
            \put(15,13){\circle*{0.7}}
            
            \multiput(0,7.5)(15,0){3}{\circle*{0.7}}
            \qbezier(0,7.5)(7.5,12.5)(15,7.5)
            \qbezier(15,7.5)(22.5,12.5)(30,7.5)
            
            \qbezier(0,7.5)(7.5,2.5)(15,7.5)
            \qbezier(15,7.5)(22.5,2.5)(30,7.5)
            
            \multiput(7.5,5)(15,0){2}{\circle*{0.7}}
            \qbezier(7.5,5)(15,-1)(22.5,5)
            \put(15,2){\circle*{0.7}}
            
            \color{myred}
            \put(3,9.8){$a$}
            \put(10.7,12.9){$a$}
            \put(18.3,12.9){$b$}
            \put(26,9.8){$b$}
            
            \put(10.7,7.8){$b$}
            \put(18.3,7.9){$a$}
            
            \put(3,4.15){$b$}
            \put(10.7,4.3){$a$}
            \put(18.3,4.15){$b$}
            \put(26,4.3){$a$}
            
            \put(10.7,1.3){$b$}
            \put(18.3,1.45){$a$}
        \end{picture}
    \caption{\label{f:diagram} An $(aabb, bbaa)$-diagram over the semigroup presentation $\langle a,b \mid ab=ba\rangle$}
\end{figure}

The bounded faces of a diagram are called \textbf{cells} and are bounded by the disjoint union of two positive paths---the \textbf{top path} and the \textbf{bottom path} of the cell. A cell $\pi$ corresponds to a relation of the presentation---its top path $\lceil\pi\rceil$ is labelled by one side while its bottom path $\lfloor\pi\rfloor$ is labelled by the other. Cells are also labelled with the relations they correspond to, though this isn't routinely included in pictures since it is implicit from the path labels. We sometimes use $\pi_{u,v}$ to denote a cell with top path label $u$ and bottom path label $v$. Similar to a cell, a diagram $\Delta$ has designated top $\lceil\Delta\rceil$ and bottom $\lfloor\Delta\rfloor$ paths whose disjoint union bounds the entire graph; if a diagram has its top path with label $u$ and its bottom path with label $v$ then we call it a $(u,v)$-diagram.

If we have a $(u,v)$-diagram $\Delta_1$ and a $(v,w)$-diagram $\Delta_2$ we define their \textbf{concatenation} $\Delta_1\circ\Delta_2$ to be the $(u,w)$-diagram obtained by identifying the bottom path of $\Delta_1$ with the top path of $\Delta_2$. If we have a $(u_1,v_1)$-diagram $\Delta_1$ and a $(u_2,v_2)$-diagram $\Delta_2$ then we define their \textbf{addition} $\Delta_1+\Delta_2$ to be the $(u_1u_2,v_1v_2)$-diagram obtained by identifying the rightmost vertex (the terminal vertex $t(\Delta_1)$) of $\Delta_1$ with the leftmost vertex (the initial vertex $i(\Delta_2)$) of $\Delta_2$.

If a diagram has two cells $\pi_1,\pi_2$ such that the bottom path of $\pi_1$ is the top path of $\pi_2$ and their opposing paths have the same label we say they form a \textbf{dipole}. We can remove a dipole by deleting the path which the cells share and identifying their remaining paths. We consider diagrams equivalent up to removing dipoles and we call a diagram \textbf{reduced} if it contains no dipoles. It was shown in \cite{kili} that reduced diagrams provide a normal form for these equivalence classes.

We denote by $D(\mathcal{P},w)$ the set of all equivalence classes of $(w,w)$-diagrams over the presentation $\mathcal{P}$. This forms a group under concatenation where the identity is the positive path labelled $w$, known as the \textbf{trivial diagram} $\varepsilon_w$, and the inverse of a diagram $\Delta$ is obtained by reflecting it along a horizontal axis.

We will require some definitions regarding the vertices of a diagram. Given a vertex $v$ we may consider its incoming edges and its outgoing edges, the sets of which are denoted $I(v)$ and $O(v)$ respectively. Notice that the cyclic order of the edges around $v$ separates each into a connected set, as such each has a natural linear order: $I(v)$ is endowed with the counter-clockwise order around $v$, while $O(v)$ is endowed with the clockwise, both starting at the point in the cyclic clockwise order where $I(v)$ gives way to $O(v)$. By the same token we can consider the set $I^\Pi(v)$ of cells who have $v$ as their terminal vertex and the set $O^\Pi(v)$ of cells who have $v$ as their initial vertex, each of which inherit the ordering from the edges of $v$ which separate consecutive cells. 

\subsubsection{Strand diagrams}

For our purposes it will be useful to consider an alternative pictorial representation of elements of diagram groups.  These are known as \textbf{strand diagrams}, a term coined by the first author in~\cite{Belk} (see also~\cite{strands} for a more general version).  Strand diagrams are closely related to the ``transistor'' pictures for diagram groups described by Guba and Sapir in \mbox{\cite[Section~4]{diagrams}} and equivalent to the planar subgroup of the braided diagram groups they later define in \mbox{\cite[Section~16]{diagrams}} (as a remark, it has been suggested in \cite{Gen} that symmetric diagram groups would be a more appropriate name for these objects). Given a diagram $\Delta$ over a semigroup presentation $\mathcal{P}$ we define its \textbf{strand diagram} $\Psi$ as follows

\begin{enumerate}
    \item There is a vertex for every cell of $\Delta$;\smallskip
    \item There is an edge from $v_1$ to $v_2$ if the corresponding cells $\pi_1,\pi_2$ have a shared boundary edge $e\in\lfloor\pi_1\rfloor\cap\lceil\pi_2\rceil$ in $\Delta$;\smallskip
    \item There is a vertex for each cell on the top (bottom) path, and an edge from (to) such a vertex to (from) another if the corresponding top (bottom) path edge forms part of the boundary of the corresponding cell.
\end{enumerate}

\noindent
and we label each component of $\Psi$ with the label of the corresponding component of~$\Delta$. We refer to the vertices defined in (1) as \textbf{interior vertices} and those defined in (3) as \textbf{boundary vertices}. An example of a strand diagram is shown in Figure~\ref{f:strand}. Statements (1) and (2) define the dual of $\Delta$ while statement (3) provides a tweak which allows us to concatenate strand diagram as we would diagrams. We can define dipoles and reductions in much the same way as with diagrams, and as such we can define the strand diagram group $S(\mathcal{P},w)$ as the group of equivalence classes of $(w,w)$-strand diagram over $\mathcal{P}$. It is clear that $S(\mathcal{P},w)\cong D(\mathcal{P},w)$.

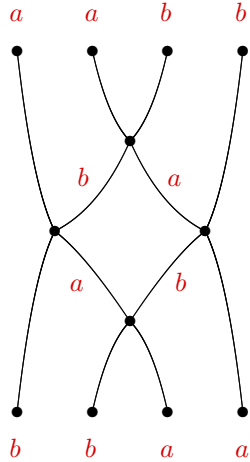
\begin{figure}
    \centering
        \setlength{\unitlength}{2mm}
        \begin{picture}(15,30)
            \multiput(0,28)(5,0){4}{\circle*{0.7}}
            \put(7.5,22){\circle*{0.7}}
            \multiput(2.5,16)(10,0){2}{\circle*{0.7}}
            \put(7.5,10){\circle*{0.7}}
            \multiput(0,4)(5,0){4}{\circle*{0.7}}
            
            \qbezier(0,28)(1,19)(2.5,16)
            \qbezier(15,28)(14,19)(12.5,16)
            \qbezier(2.5,16)(1,13)(0,4)
            \qbezier(12.5,16)(14,13)(15,4)
            
            \qbezier(5,28)(6,23.5)(7.5,22)
            \qbezier(10,28)(9,23.5)(7.5,22)
            \qbezier(5,4)(6,8.5)(7.5,10)
            \qbezier(10,4)(9,8.5)(7.5,10)
            
            \qbezier(7.5,22)(5.5,17.5)(2.5,16)
            \qbezier(7.5,22)(9.5,17.5)(12.5,16)
            \qbezier(2.5,16)(4.5,14.5)(7.5,10)
            \qbezier(12.5,16)(10.5,14.5)(7.5,10)
            
            \color{myred}
            \put(-0.5,30){$a$}
            \put(4.5,30){$a$}
            \put(9.5,30){$b$}
            \put(14.5,30){$b$}
            
            \put(-0.5,1){$b$}
            \put(4.5,1){$b$}
            \put(9.5,1){$a$}
            \put(14.5,1){$a$}
            
            \put(4,19){$b$}
            \put(10,19){$a$}
            \put(3.5,12){$a$}
            \put(10.5,12){$b$}
        \end{picture}
    \caption{The strand diagram corresponding to the diagram shown in Figure~\ref{f:diagram}}
    \label{f:strand}
\end{figure}

We borrow an idea from \cite{strands} characterising interior vertices. If an interior vertex has precisely one incoming edge and at least two outgoing edges we call it a \textbf{split}. Similarly, if an interior vertex has precisely one outgoing edge and at least two incoming edges we call it a \textbf{merge}. Notice that if we have a presentation $\mathcal{P}=\langle\Sigma\ |\ \mathcal{R}\rangle$ such that $\mathcal{R}$ only has at most one relation of the form $u=w$ for each $u\in\Sigma$ where $w$ has length at least two (such a $\mathcal{P}$ is known as \textbf{tree-like}) then the above distinction dichotomises the interior vertices of any strand diagram over $\mathcal{P}$. Indeed, the presentations considered in this paper have this property.

\subsection{Some isomorphism theorems for diagram groups}

In \cite[Theorem~7.1]{diagrams}, Guba and Sapir prove that if $w$ and $w'$ are words over the generators of $\mathcal{P}$ that represent the same element of the corresponding semigroup, then the diagram groups $D(\mathcal{P},w)$ and $D(\mathcal{P},w')$ are isomorphic.  Moreover, in \mbox{\cite[Theorem~4.1]{complex}}, Guba and Sapir describe a very general method of modifying a semigroup presentation to yield an isomorphic diagram group.  Among the consequences of this theorem are the following.

\begin{theorem}[Guba and Sapir Moves \cite{complex}]\label{thm:moves}
    Consider a semigroup presentation $\mathcal{P}$.
    \begin{enumerate}
        \item Suppose $u_i=v_i$ and $u_j=v_j$ are distinct relations in $\mathcal{P}$.  If either $u_j$ or $v_j$ has $u_i$ as a subword, then replacing this $u_i$ subword with $v_i$ in the relation $u_j=v_j$ does not change the isomorphism type of $D(\mathcal{P},u)$ for any word $u$.\smallskip
        \item Suppose $x$ is a generator in $\mathcal{P}$ and only appears in one relation, which has the form $x=w$ for some word $w$ that does not contain~$x$.  In this case, removing the generator $x$ as well as the relation $x=w$ does not change the isomorphism type of $D(\mathcal{P},u)$ for any word $u$ not containing $x$.
    \end{enumerate}
\end{theorem}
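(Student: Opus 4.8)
The plan is to prove the two moves separately, working directly with diagrams and dipole reduction rather than invoking the general machinery of~\cite{complex}. Throughout I write $\mathcal{P}'$ for the modified presentation, and I use repeatedly that reduced diagrams furnish a normal form for the elements of a diagram group~\cite{kili}.

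I would dispose of part~(2) first, as it is essentially a single dipole observation. Fix a word $u$ not containing $x$ and let $\Delta$ be any $(u,u)$-diagram over $\mathcal{P}$. Since $u$ has no occurrence of $x$, no edge of $\lceil\Delta\rceil$ or $\lfloor\Delta\rfloor$ is labelled $x$, so every $x$-labelled edge $e$ is interior and is shared by exactly two cells. Because $x$ occurs in only one relation, $x=w$, and occurs there as an entire side, the cell lying above $e$ must be $\pi_{w,x}$ with $e=\lfloor\pi_{w,x}\rfloor$, and the cell below must be $\pi_{x,w}$ with $e=\lceil\pi_{x,w}\rceil$. These two cells share the single edge $e$ as the full bottom path of the first and the full top path of the second, and their opposing paths are both labelled $w$; hence they form a dipole. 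Consequently a reduced $(u,u)$-diagram over $\mathcal{P}$ has no $x$-edges, and therefore no cells corresponding to $x=w$, so it is precisely a reduced $(u,u)$-diagram over $\mathcal{P}'$. The two sets of reduced diagrams coincide and carry the same concatenation, giving $D(\mathcal{P},u)=D(\mathcal{P}',u)$.

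For part~(1) I would assume, after a symmetric argument for the $v_j$ case, that $u_j=\alpha u_i\beta$ and that $\mathcal{P}'$ replaces $u_j=v_j$ by $\alpha v_i\beta=v_j$ while keeping $u_i=v_i$; write $R_i,R_j$ for these two relations of $\mathcal{P}$ and $R_j'$ for the modified relation of $\mathcal{P}'$. The idea is to build explicit mutually inverse maps between diagrams by a purely local cell substitution. Define $\phi$ to leave every cell untouched except an $R_j$-cell $\pi_{\alpha u_i\beta,\,v_j}$, which it replaces by $(\varepsilon_\alpha+\pi_{u_i,v_i}+\varepsilon_\beta)\circ\pi_{\alpha v_i\beta,\,v_j}$ (and symmetrically for its mirror), the last factor now being an $R_j'$-cell of $\mathcal{P}'$. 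Dually, define $\psi$ on diagrams over $\mathcal{P}'$ by replacing each $R_j'$-cell $\pi_{\alpha v_i\beta,\,v_j}$ by $(\varepsilon_\alpha+\pi_{v_i,u_i}+\varepsilon_\beta)\circ\pi_{\alpha u_i\beta,\,v_j}$. A local computation then shows that $\psi\circ\phi$ sends an $R_j$-cell to $(\varepsilon_\alpha+\pi_{u_i,v_i}+\varepsilon_\beta)\circ(\varepsilon_\alpha+\pi_{v_i,u_i}+\varepsilon_\beta)\circ\pi_{\alpha u_i\beta,\,v_j}$, in which the two $R_i$-cells form a dipole and cancel to recover the original cell; hence $\psi\circ\phi$, and likewise $\phi\circ\psi$, act as the identity on each cell.

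The main work, and the step I expect to be the genuine obstacle, is checking that $\phi$ and $\psi$ descend to the diagram groups: that each respects concatenation, so is a homomorphism, and, crucially, that each is well defined on dipole-equivalence classes. Compatibility with $\circ$ is routine, since the substitution is purely local and stacking diagrams stacks their images. For well-definedness one must verify that every type of dipole in a diagram is carried to a diagram that reduces compatibly; the only nontrivial cases are dipoles involving an $R_j$-cell, and here one checks that after substitution an internal $\pi_{\alpha v_i\beta,\,v_j}\circ\pi_{v_j,\,\alpha v_i\beta}$ dipole cancels, leaving a pair of $R_i$-cells that in turn cancel, so that the image reduces to $\varepsilon_{u_j}$ as required. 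Once the homomorphism property and well-definedness are established, the mutual-inverse computation above yields $D(\mathcal{P},u)\cong D(\mathcal{P}',u)$.
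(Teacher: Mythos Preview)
The paper does not actually prove this theorem: it is stated as a consequence of \cite[Theorem~4.1]{complex} and simply cited, with no argument given.  Your proposal is therefore not competing with any proof in the paper but rather supplying one where the authors defer to the literature.

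Your argument is essentially correct and gives a pleasant self-contained treatment.  Part~(2) is clean: the key point that every interior $x$-edge forces a dipole is exactly right, since $x$ occurs only as an entire side of a single relation, and the conclusion that reduced $(u,u)$-diagrams over $\mathcal{P}$ and $\mathcal{P}'$ literally coincide (with the same concatenation) follows.  Part~(1) via the local cell substitutions $\phi,\psi$ is the standard direct approach and works; the dipole checks you outline are the right ones.  Two small points worth tightening: first, you should also record explicitly that a dipole of the form $\pi_{v_j,u_j}\circ\pi_{u_j,v_j}$ maps under $\phi$ to a diagram that reduces to $\varepsilon_{v_j}$ (your sketch only treats the $\pi_{u_j,v_j}\circ\pi_{v_j,u_j}$ orientation); second, when you say $\phi$ ``respects concatenation'', note that $\phi$ is defined on diagrams rather than on equivalence classes, so what you really use is that $\phi(\Delta_1\circ\Delta_2)=\phi(\Delta_1)\circ\phi(\Delta_2)$ at the diagram level, and then the dipole-compatibility check is what lets this descend to a homomorphism of diagram groups.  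With those details filled in the argument is complete, and it has the advantage over the paper's citation of being entirely elementary and not requiring the reader to unpack the general framework of \cite{complex}.
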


Note that both of these moves resemble Tietze transformations of group presentations, in that they don't change the isomorphism type of the underlying semigroup.  However, the reader should be warned that transformations that preserve the isomorphism type of the semigroup can nonetheless change the isomorphism type of an associated diagram group.  For example, adding a new relation to a semigroup presentation that follows from the existing relations will usually change the diagram group.

\section{Geometrically fast bump groups are isomorphic to diagram groups}\label{sec:IsomorphismBumpsDiagrams}

\subsection{The diagram representation}

Consider a geometrically fast set of bumps $B$ with dynamical diagram $D$. Then there exists a marking of $B$ which witnesses its sources and destinations being disjoint. Taking the sources and destinations along with the connected subsets of their complement in the support of $B$ (i.e. the gaps between the feet) we obtain a general partition of the support of $B$.

We can improve on this partition using Theorem~\ref{thm:PingPong}. Since any geometrically fast set with the same dynamical diagram generates the same group, a partition obtained as above from a particular set can be used as a general partition associated to the dynamical diagram and the group it generates. In particular, for any dynamical diagram without isolated bumps there exists a geometrically fast set with that diagram such that there exists a marking which defines disjoint feet that cover the support of the set (modulo finitely many isolated points). An example is shown in Figure~\ref{f:cp}. For dynamical diagrams with isolated bumps this is nearly possible---in this case, we just require a `gap' (i.e. a fundamental domain) between the source and destination of each isolated bump. We may call this the \textbf{canonical partition} of a dynamical diagram and the marking which realises it may be called its \textbf{canonical marking} (this may be taken to coincide with the canonical marking as defined in \cite{fast}).

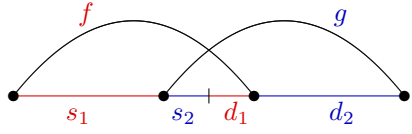
\begin{figure}
    \centering
        \setlength{\unitlength}{2mm}
        \begin{picture}(30,7)
            \color{myred}
            \put(4.3,5){$f$}
            \put(3.5,-1.6){$s_1$}
            \put(14,-1.6){$d_1$}
            \put(0,0){\line(1,0){10}}
            \put(16,0){\line(-1,0){3}}
            
            \color{myblue}
                
            \put(21.3,5){$g$}  
            \put(10.5,-1.6){$s_2$}
            \put(21,-1.6){$d_2$}

            \put(10,0){\line(1,0){3}}
            \put(26,0){\line(-1,0){10}}

            \color{black}
            \put(13,-0.5){\line(0,1){1}}
            
            \multiput(0,0)(16,0){2}{\circle*{0.7}}
                \qbezier(0,0)(8,10)(16,0)
            \multiput(10,0)(16,0){2}{\circle*{0.7}}
            \qbezier(10,0)(18,10)(26,0)
        \end{picture}
    \caption{The \rule{0pt}{14pt}canonical partition for the dynamical diagram generating~$F$.}
    \label{f:cp}
\end{figure}

Having obtained such a partition for a dynamical diagram we may use it to define diagrams (in the sense of Guba and Sapir) representing each bump in $B$. This is best described through a simple example. Consider the partition of the dynamical diagram shown in Figure~\ref{f:cp}. By the definition of this partition we have \[(s_1)f=s_1s_2,\text{ } (s_2d_1)f=d_1\] and \[(s_2)g=s_2d_1,\text{ } (d_1d_2)g=d_2\] where in each case restrictions of the maps witness the intervals as homeomorphic. This observation makes it natural to consider the diagrams

\[\beta_f=\pi_{A,AB}+\pi_{BC,C}+\varepsilon_{D}\]
\[\beta_g=\varepsilon_{A}+\pi_{B,BC}+\pi_{CD,D}\]

\medskip

\noindent
over the semigroup presentation $\mathcal{P}=\langle A,B,C,D\ |\ A=AB, B=BC, C=BC, D=CD\rangle$ as candidates for representing the bumps $f$ and $g$. Pictures of these diagrams are shown in Figure~\ref{f:gd}.

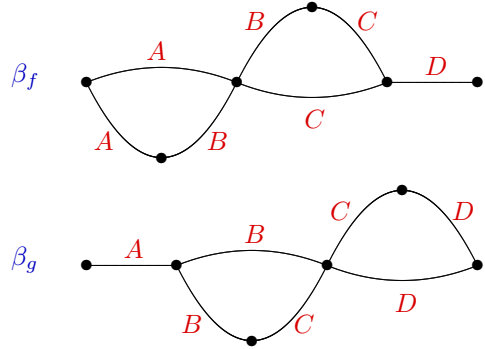
\begin{figure}
    \begin{center}
    \setlength{\unitlength}{2mm}
        \begin{picture}(30,16)
            \multiput(0,10)(10,0){3}{\circle*{0.7}}
            \put(26,10){\circle*{0.7}}
            \put(5,5){\circle*{0.7}}
            \put(15,15){\circle*{0.7}}
            \qbezier(0,10)(5,12)(10,10)
            \qbezier(0,10)(5,0)(10,10)
            \qbezier(10,10)(15,20)(20,10)
            \qbezier(10,10)(15,8)(20,10)
            \qbezier(20,10)(23,10)(26,10)
            \color{myred}
            \put(4,11.5){$A$}
            \put(10.5,13.5){$B$}
            \put(18,13.5){$C$}
            \put(22.5,10.5){$D$}
            \put(0.5,5.5){$A$}
            \put(8,5.5){$B$}
            \put(14.5,7){$C$}
            \color{myblue}
            \put(-5,10){$\beta_f$}
        \end{picture}
        
        \begin{picture}(30,7)
            \multiput(6,5)(10,0){3}{\circle*{0.7}}
            \put(0,5){\circle*{0.7}}
            \put(11,0){\circle*{0.7}}
            \put(21,10){\circle*{0.7}}
            \qbezier(0,5)(3,5)(6,5)
            \qbezier(6,5)(11,-5)(16,5)
            \qbezier(6,5)(11,7)(16,5)
            \qbezier(16,5)(21,15)(26,5)
            \qbezier(16,5)(21,3)(26,5)
            \color{myred}
            \put(2.5,5.5){$A$}
            \put(10.5,6.5){$B$}
            \put(16.2,8){$C$}
            \put(24.3,8){$D$}
            \put(6.3,0.5){$B$}
            \put(13.8,0.5){$C$}
            \put(20.5,1.9){$D$}
            \color{myblue}
            \put(-5,5){$\beta_g$}
        \end{picture}
    \end{center}
    \caption{\label{f:gd} The diagrams representing the bumps $f,g$ from Figure~\ref{f:cp}}
\end{figure}

We now describe the semigroup presentation and diagrams defined by a dynamical diagram of a geometrically fast set of bumps $B$ in full generality. Suppose $B$ contains $n$ bumps, $k$ of which are isolated, and let $A_1,\ldots,A_{2n+k}$ be the canonical partition of its dynamical diagram. Consider $b\in B$ and define $i(b)$ and $j(b)$ to be the integers such that $\src(b)=A_{i(b)}$ and $\dest(b)=A_{j(b)}$. Then $A_{i(b)},A_{i(b)+1},\ldots,A_{j(b)-1},A_{j(b)}$ is the partition of $\supt(b)$ contained in the canonical partition and we define $G(b)=A_{i(b)+1}\ldots A_{j(b)-1}$, which is a fundamental domain of $b$. By definition \[(A_i(b))b=A_i(b)G(b), (G(b)A_{j(b)})b=A_{j(b)}\] and so the diagram representing $b$ is \[\beta_b=\varepsilon_{A_1\ldots A_{i(b)-1}}+\pi_{A_i(b),A_i(b)G(b)}+\pi_{G(b)A_{j(b)},A_{j(b)}}+\varepsilon_{A_{j(b)+1}\ldots A_{2n+k}}\] over the semigroup presentation $\mathcal{P}=\langle A_1,\ldots, A_{2n+k}\ |\ A_{i(b)}=A_{i(b)}G(b), G(b)A_{j(b)}=A_{j(b)} \text{ for each } b\in B\rangle$. We may refer to the $\beta_b$ as \textbf{generator diagrams}.

We claim that $\{\beta_b\ |\ b\in B\}$ is a generating set for the diagram group $D(\mathcal{P},A_1\ldots A_{2n+k})$ which is isomorphic to the geometrically fast group $\langle B\rangle$.

\subsection{The isomorphism}

Let $B$ be a finite geometrically fast set of bumps and let $a_1,\ldots,a_n$ and $\mathcal{P}$ be the canonical partition and semigroup presentation obtained from $B$ as outlined in the previous section. We let $\delta: B\rightarrow D(\mathcal{P},a_1\ldots a_n)$ be the map defined by $b_i\mapsto\beta_i$ where $\beta_i$ is the diagram representing the bump $b_i$. We prove that this map extends to an isomorphism between $\langle B\rangle$ and $D(\mathcal{P},a_1\ldots a_n)$.

For the purposes of the following proposition we add an additional set of labels to our strand diagrams. Notice from how $\mathcal{P}$ is constructed there is a two-to-one correspondence between its relations and the set of bumps $B$. Thus, we will label each interior vertex of a strand diagram $S$ with the bump corresponding to the relation it is labelled by (recalling that cells of diagrams, and therefore interior vertices of strand diagrams, are labelled by relations). We denote by $BP(S)$ the set of bump vertex labels of maximal paths in $S$; that is, the paths starting at a top boundary vertex and ending at a bottom boundary vertex.

\begin{prop}
    Let $w\in B^+$ where $B$ is a fast set of bumps and let $S$ be the reduced strand diagram of the image $w\delta$. There is an interval partition $\mathscr{P}_1,\ldots,\mathscr{P}_k$ of the support of $B$ in left-to-right, one-to-one correspondence with the set of maximal paths $p_1,\ldots,p_k$ of $S$ such that given any $x\in\mathscr{P}_i$ the simply local reduction $w_x$ is the bump vertex label of $p_i$. Furthermore, if $p_i$ is a path with boundary labels $a_1$ and $a_2$ respectively then $\mathscr{P}_i\subseteq a_1$ and $(\mathscr{P}_i)w\subseteq a_2$.
\end{prop}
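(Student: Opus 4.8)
The plan is to match the maximal paths of $S$ with the forward orbits of points under the homeomorphism $w$. Writing $w=b_{i_1}\cdots b_{i_m}$, we have $w\delta=\beta_{i_1}\circ\cdots\circ\beta_{i_m}$, so $S$ is obtained by stacking the strand diagrams of the factors $\beta_{i_t}$. First I would check that this stacked diagram is already reduced, so that $S=w\delta$ and it carries the layered structure $\beta_{i_1}\mid\cdots\mid\beta_{i_m}$. Since $\mathcal{P}$ is tree-like, a dipole would be a split all of whose outgoing edges feed a single merge whose incoming edges all come from that split (or the horizontal mirror of this), the two cells forming a pair $\pi_{u,v},\pi_{v,u}$; the two cells of a single $\beta_b$ meet only at a vertex, and a short case analysis on cells of adjacent factors shows that whenever two such cells share their entire common path the two opposing paths carry distinct single generators, which in each case forces some $G(b)$ to be the empty word --- impossible because $b$ moves its marker. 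Hence no dipoles occur and $S=w\delta$.

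For $x$ outside the finite set $E$ consisting of the preimages of the endpoints of the pieces $a_1,\dots,a_n$ under all the prefixes $b_{i_1}\cdots b_{i_t}$, the orbit $x=x_0$, $x_t=x_{t-1}b_{i_t}$, $x_m=xw$ has each $x_t$ in the interior of a unique piece $a_{\ell_t}$. The step $x_{t-1}\to x_t$ traverses a segment of the factor $\beta_{i_t}$: a straight strand if $a_{\ell_{t-1}}$ lies outside $\supt(b_{i_t})$, the split at $\src(b_{i_t})$ if $a_{\ell_{t-1}}=\src(b_{i_t})$, and the merge at $\dest(b_{i_t})$ if $a_{\ell_{t-1}}$ is any other piece of $\supt(b_{i_t})$ --- this is exactly what the defining relations $(A_{i(b)})b=A_{i(b)}G(b)$ and $(G(b)A_{j(b)})b=A_{j(b)}$, together with $b$ fixing pieces outside its support, assert. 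These segments compose to a maximal path $p(x)$ of $S$ from the $\ell_0$-th top boundary vertex to the $\ell_m$-th bottom boundary vertex. Now $p(x)$ meets an interior vertex in the factor $\beta_{i_t}$ exactly when $a_{\ell_{t-1}}\subseteq\supt(b_{i_t})$, i.e.\ when $b_{i_t}$ moves $x_{t-1}$, i.e.\ when $b_{i_t}$ survives in passing from $w$ to $w_x$; as that vertex is labelled $b_{i_t}$, the bump vertex label of $p(x)$ is precisely $w_x$. Finally $x\in\mathrm{int}(a_{\ell_0})$ and $xw\in\mathrm{int}(a_{\ell_m})$ while the boundary labels of $p(x)$ are $a_{\ell_0}$ and $a_{\ell_m}$, which gives the stated inclusions.

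The fibre of $x\mapsto p(x)$ over a path with itinerary $(\ell_0,\dots,\ell_m)$ is $\bigcap_{t=0}^{m}(b_{i_1}\cdots b_{i_t})^{-1}(\mathrm{int}(a_{\ell_t}))$, an intersection of open intervals and hence an open interval; these fibres are pairwise disjoint and cover $\supt(B)$ up to the finite set $E$, and listing them left to right produces $\mathscr{P}_1,\dots,\mathscr{P}_k$. \textbf{The crux is to show that every maximal path of $S$ has nonempty fibre}, so that $x\mapsto p(x)$ is a bijection onto the set of maximal paths (injectivity is automatic). I would prove this via the following invariant along any maximal path $p$: for each $t$, the image under $b_{i_1}\cdots b_{i_t}$ of the set of points realizing the first $t$ segments of $p$ is either all of $\mathrm{int}(a_{\ell_t})$, or a proper subinterval of $\mathrm{int}(a_{\ell_t})$ with $a_{\ell_t}$ a destination piece. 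A straight strand preserves this; a merge sends its input by $b$ into $\dest(b)$, and since $b$ carries $\dest(b)$ and each individual piece of $G(b)$ strictly inside $\dest(b)$, its output is a proper subinterval of a destination piece's interior; and a split is entered only at the source piece $\src(b)$, which --- the feet of $B$ being pairwise disjoint --- is never a destination piece, so by the invariant the incoming set is all of $\mathrm{int}(\src(b))$, which $b$ maps onto the full interior of $A_{i(b)}G(b)$, hence onto each of the split's output pieces; in particular every outgoing choice at a split has nonempty fibre.

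It remains to match the orders: two distinct maximal paths of $S$ first diverge at a split (straight strands and merges offer no choice), which by the invariant is entered with its incoming set equal to all of $\mathrm{int}(\src(b))$; since $b$ is increasing and the output pieces of a split occur left to right in the order of its outgoing edges, the path leaving by the more leftward edge has realizing set lying to the left, and as later factors only shrink realizing sets within their already separated territories --- each $b_{i_1}\cdots b_{i_t}$ being increasing --- the left-to-right order of the maximal paths agrees with that of the fibres $\mathscr{P}_1,\dots,\mathscr{P}_k$. I expect the nonempty-fibre invariant of the previous paragraph to be the main obstacle; the reducedness check is routine but fiddly, and the bookkeeping for the order is straightforward once the full-piece invariant is in hand.
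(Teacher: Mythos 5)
Your proof is correct, and its skeleton---refine the canonical partition layer by layer, with all the difficulty concentrated in showing that every outgoing branch of a split is realized by a nonempty subinterval---matches the paper's. But you reach that crux by a genuinely different route. The paper inducts on the length of $w$ and proves as its key Claim that a maximal path consisting only of splits has its part mapping \emph{onto} the target piece; the proof of that Claim is global, using that an all-splits path is the unique path reaching its bottom boundary vertex together with surjectivity of $w$ on $\supt(B)$ (some part must fill the piece, and no other part can). You instead propagate a local dichotomy along an arbitrary maximal path---the realized set is the full interior of the current piece unless that piece is a destination, in which case it may be a proper subinterval---which closes up because a merge pushes each input piece properly inside $\dest(b)$ while a split is only ever entered at $\src(b)$, and disjointness of the feet guarantees a source is never a destination. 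Your invariant is more self-contained (no appeal to the global surjectivity of $w$), and your explicit verification that the stacked product of generator diagrams is already reduced (any dipole between adjacent factors would force some $G(b)$ to be empty) fills in a step the paper passes over with ``without loss, suppose $S\circ\beta$ does not contain dipoles''; what the paper's induction buys is that it never needs the orbit/fibre formalism or a separate argument that every path itinerary is realized. One loose end, which the paper also treats casually: your fibres are open intervals omitting the finite breakpoint set $E$, so to obtain an honest partition of $\supt(B)$ satisfying the conclusion for \emph{every} $x\in\mathscr{P}_i$ you still need to attach each breakpoint to the correct adjacent fibre (half-open on the appropriate side) and check $w_x$ there.
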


\begin{proof}
    We proceed by induction on the length of $w$. Let $b\in B$ be supported on $A=a_i\ldots a_j$ and set $\beta=b\delta$ and, without loss, suppose $S\circ\beta$ does not contain dipoles. If a maximal path on $S$ ends at a boundary vertex labelled $a\notin\{a_i,\ldots,a_j\}$ then it is preserved in $S\circ\beta$ and, similarly, if we take a point $x$ such that $xw\notin A$ then $(wb)_x=w_x$.
    
    Suppose $p$ is a maximal path on $S$ ending at a vertex labelled by $a\in\{a_i,\ldots,a_j\}$ and let $\mathscr{P}$ be its corresponding part. In $S\circ\beta$, consider those maximal paths which contain $p$. If there is only one, call it $q$, its final interior vertex must be a merge and we take $\mathscr{P}$ as corresponding to $q$. Since the bump vertex label of $p$ is equal to the simply local reduction $w_x$ of $w$ for any $x\in\mathscr{P}$ and $(\mathscr{P})w\subseteq a$ we see that $(wb)_x=w_xb$ is the bump vertex label of $q$ for any $x\in\mathscr{P}$. Further, the final vertex of $q$ being a merge corresponds to the action of $b$ sending the interval $a$ into the destination of $b$, which labels the bottom boundary vertex of $q$.
    
    If $p$ is contained in $d$ distinct maximal paths $q_1,\ldots,q_d$ of $S\circ\beta$ then they must share a final interior vertex and it must be a split. First notice that each $q_l$ must only contain splits since otherwise somewhere a merge would immediately precede a split but it follows from $\mathcal{P}$ being tree-like that it would form a dipole in $S\circ\beta$.
    
    \medskip
    
    \leftskip 10pt
    \rightskip 10pt
    
    \noindent\textit{Claim}: If $p$ is a path in $S$ from $a_1$ to $a_2$ consisting only of splits then its corresponding part $\mathscr{P}$ satisfies $\mathscr{P}w=a_2$.
    
    \smallskip
    
    \noindent\textit{Proof}: By the inductive hypothesis we have that $\mathscr{P}w\subseteq a_2$. If $\mathscr{P}w\subset a_2$ then there would have to be some other part $\mathscr{P}'$ which is also mapped into $a_2$ by $w$---however, since $p$ consists only of splits, any other path must lead to a boundary vertex labelled by a distinct part of the canonical partition and so, by the hypothesis, would not map into $a_2$. Thus, $\mathscr{P}w=a_2$.
    
    \leftskip 0pt
    \rightskip 0pt
    
    \medskip
    \noindent
    The final vertex of the paths $q_1,\ldots,q_d$ being a split corresponds to the action of $b$ mapping the interval $a$, which is its source, onto the complement of its destination inside $A$. By the claim, this means that $\mathscr{P}w$ is mapped onto a subset of the canonical partition such that we may partition $\mathscr{P}$ into intervals $\mathscr{Q}_1,\ldots,\mathscr{Q}_d$ with $\mathscr{Q}_lw=a_l$ where $a_l$ is the label of the boundary vertex of $q_l$. We take $\mathscr{Q}_l$ to be the part corresponding to $q_l$ and note that if $x\in\mathscr{Q}_l$ we have that $w_x$ is the bump vertex label of $p$, from which it follows that $w_xb$ is the bump vertex label of $q_l$ and it is clear that $w_xb$ is simply locally reduced with respect to $x$.
\end{proof}

\noindent
One useful upshot of this proposition is the following corollary.

\begin{cor}
    Let $w\in B^+$ and let $S$ be a strand diagram of the image $w\delta$. Then \[L^\vee(w)=\{u^\vee\ |\ u\in BP(S)\}\]
\end{cor}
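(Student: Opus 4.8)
The plan is to deduce the corollary from the Proposition, which already settles the case of the reduced strand diagram. So I would first show that the set $\{u^\vee \mid u \in BP(S)\}$ does not depend on which strand diagram of $w\delta$ is chosen, i.e.\ that it is unchanged under insertion or deletion of a dipole. Let $S'$ be obtained from $S$ by collapsing a dipole formed by interior vertices $v_1$ (on top) and $v_2$ (below). The two cells of a dipole carry the same relation of $\mathcal{P}$ but are mirror images of one another, so $v_1$ and $v_2$ contribute a generator $b$ and its inverse $b^{-1}$ (for some $b \in B$) to the bump vertex label of any maximal path through them --- where a maximal path picks up a generator or its inverse according to the orientation in which it crosses the corresponding cell. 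Since $v_1$ and $v_2$ share every edge lying between them, any maximal path meeting one of the two vertices meets the other immediately afterwards, so its bump vertex label contains the cancelling factor $bb^{-1}$ (or $b^{-1}b$) at exactly that place, while maximal paths avoiding the dipole are untouched. Collapsing the dipole deletes this factor from the affected labels, and free reduction is insensitive to deleting a cancelling pair, so $\{u^\vee \mid u \in BP(S)\} = \{u^\vee \mid u \in BP(S')\}$. As any two $(w,w)$-strand diagrams of $w\delta$ are joined by a finite chain of dipole moves (reduced diagrams being a normal form), the set $\{u^\vee \mid u \in BP(S)\}$ is an invariant of $w\delta$, so it suffices to treat the reduced strand diagram $S_0$.

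For $S_0$ I would invoke the Proposition: it produces an interval partition of $\supt(B)$ whose parts biject with the maximal paths of $S_0$ in such a way that the part containing $x$ corresponds to a path with bump vertex label $w_x$; hence $BP(S_0) = \{w_x \mid x \in \supt(B)\}$. Because $w \in B^+$, each $w_x$ is a positive subword of $w$, hence already freely reduced, so $u^\vee = u$ for every $u \in BP(S_0)$ and $\{u^\vee \mid u \in BP(S_0)\} = \{w_x \mid x \in \supt(B)\}$. Finally, any $x \in I$ outside the feet of $B$ is fixed by every bump, hence by $w$, so $w_x$ is the empty word --- its own free reduction, and the label carried by the trivial maximal paths of $S_0$. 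Putting these together yields $L^\vee(w) = \{(w_x)^\vee \mid x \in I\} = \{u^\vee \mid u \in BP(S_0)\} = \{u^\vee \mid u \in BP(S)\}$, the desired equality.

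The hard part will be the dipole-invariance step --- concretely, verifying that the two interior vertices of a dipole always sit consecutively on every maximal path that meets them and that they contribute mutually inverse generators, so that collapsing the dipole amounts to exactly one free cancellation inside the relevant bump vertex labels. Once that is established, the remainder is a direct reading-off of the Proposition together with the observation that a simply local reduction of a positive word is already reduced.
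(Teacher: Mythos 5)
Your overall strategy---reduce to the reduced strand diagram and then read the statement off Proposition 3.1---is the right idea, and your second paragraph is essentially all the paper intends (it offers no proof, treating the corollary as an immediate restatement of the proposition). But the dipole-invariance step, which you correctly flag as the crux, is not merely hard: as you have argued it, it is false, and indeed the corollary itself fails for arbitrary unreduced representatives of $w\delta$. The trouble is a merge-then-split dipole, i.e.\ a negative cell $\pi_{u,a}$ sitting above a positive cell $\pi_{a,u}$. The merge has $k\geq 2$ incoming edges and the split $k\geq 2$ outgoing edges, and a maximal path may enter the merge along its $i$-th incoming edge and leave the split along its $j$-th outgoing edge with $i\neq j$. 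Collapsing the dipole splices the $i$-th incoming edge to the $i$-th outgoing edge, so it does not simply delete a cancelling pair from each affected label: it destroys every off-diagonal path outright, and such paths can carry freely reduced labels realised by no path of the reduced diagram. Concretely, over the presentation $\langle A,B,C,D\mid A=AB,\ BC=C,\ B=BC,\ CD=D\rangle$ for $F=\langle f,g\rangle$, take $w=g^2$ and the representative $S=\beta_g\circ\bigl(\pi_{AB,A}\circ\pi_{A,AB}+\varepsilon_{CD}\bigr)\circ\beta_g$ of $w\delta$. The maximal path entering the inserted merge along its $A$-edge (prefix label empty) and leaving the inserted split along its $B$-edge (suffix label $g$, through the second $\beta_g$) has bump vertex label $f^{-1}fg$, whose free reduction $g$ does not lie in $L^\vee(g^2)=\{1,g^2\}$. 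So $\{u^\vee\mid u\in BP(S)\}\neq L^\vee(w)$ for this $S$, and no argument can close this gap.

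The statement is therefore only correct when $S$ is the reduced strand diagram of $w\delta$---which, for $w\in B^+$, coincides with the product of generator diagrams, since a product containing only positive cells $\pi_{A_i,A_iG}$ and negative cells $\pi_{GA_j,A_j}$ contains no mirrored pair and hence no dipole; this is how the word ``a'' in the corollary should be read. With that reading, the corollary follows from Proposition 3.1 exactly as in your second paragraph: $BP(S)=\{w_x\mid x\in\supt(B)\}$ and each $w_x$ is a positive, hence freely reduced, word. One further small caution: your claim that the empty word is ``the label carried by the trivial maximal paths of $S_0$'' is not always available, since every maximal path may pass through an interior vertex (already for $w=fg$ in the example above); the empty local reduction contributed by points outside $\supt(B)$ is simply invisible to the diagram. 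That mismatch between $L(w)=\{w_x\mid x\in I\}$ and the proposition's partition of $\supt(B)$ is an imprecision present in the paper's own statement, but your fix for it does not work as written.
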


If we consider the details of last proof we can be more precise regarding the structure of the partition $\mathscr{P}_1,\ldots,\mathscr{P}_k$ for a given $w$. Essentially, the partition is constructed by inductively refining the canonical partition---following a path starting at the top boundary of its strand diagram, each time we cross a split labelled by $b\in B$ we partition the current part $\mathscr{P}$ in question according to which part of the canonical partition the action of $b$ sends each point to; that is, we break $\mathscr{P}$ into disjoint intervals where the breakpoints are the images of the breaks in the canonical partition under $b^{-1}$. Since the canonical partition is defined by markers and images of markers we can then see that the partition $\mathscr{P}_1,\ldots,\mathscr{P}_k$ is defined by a finite subset of the set of orbits of markers $M\langle B\rangle$ where $M$ is the canonical marking. We may refer to such a partition as the \textbf{canonical refinement} for the word $w$.

\begin{cor}
    Let $w\in B^*$ where $B$ is a geometrically fast set of bumps and let $M$ be a marking which witnesses that $B$ is fast. Then \[L(w)=\{w_x\ |\ x\in M\langle B\rangle \}\]
\end{cor}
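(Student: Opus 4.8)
The plan is to exploit the Proposition, which tells us that for $w\in B^+$ the map $x\mapsto w_x$ is constant on each part $\mathscr P_i$ of the canonical refinement of $w$, and that $\{w_x:x\in\supt(B)\}$ is exactly the (finite) set of bump vertex labels of the maximal paths of the reduced strand diagram of $w\delta$. The inclusion $\{w_x:x\in M\langle B\rangle\}\subseteq L(w)$ is immediate since $M\langle B\rangle\subseteq\supt(B)\subseteq I$, and the case $w=\emptyset$ is trivial, so the substance is the reverse inclusion; by the Proposition it suffices to show that every part $\mathscr P_i$ of the canonical refinement meets $M\langle B\rangle$. (Points outside $\supt(B)$ contribute only the trivial reduction, which I set aside.)

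First I would check the base case: every part $A_\ell$ of the canonical partition itself meets $M\langle B\rangle$. Each such $A_\ell$ is a source, a destination, or a fundamental-domain gap of some bump $b$, hence lies in $\supt(b)$; and the orbit $\{m_bb^k:k\in\mathbb Z\}$ of the marker $m_b$ of $b$ in $M$ runs through $\supt(b)$ one fundamental domain at a time, so $A_\ell$ contains $m_bb^k$ for $k$ sufficiently negative if $A_\ell=\src(b)$, for $k$ sufficiently positive if $A_\ell=\dest(b)$, and for the unique $k$ whose fundamental domain meets it if $A_\ell$ is a gap.

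Then I would show, by induction on $|w|$ and following the inductive construction in the proof of the Proposition, that every part $\mathscr P_i$ of the canonical refinement of $w$ has the form $(A_\ell)g$ for a canonical part $A_\ell$ and some $g\in\langle B\rangle$. The ingredients: over the tree-like presentation $\mathcal P$, every maximal path of a reduced strand diagram is a (possibly empty) run of splits followed by a (possibly empty) run of merges; because the outgoing strand of a merge cell is labelled by a destination and the incoming strand of a split cell by a source — and no destination equals a source, the feet being disjoint — the only parts refined when a generator diagram $\beta_b$ is appended are the parts of split-only paths; by the Claim in the proof of the Proposition a split-only path from $a_1$ to $a_2$ has part $\mathscr P$ with $\mathscr Pu=a_2$ for the current stage's word $u\in\langle B\rangle$, i.e. $\mathscr P=(a_2)u^{-1}$; and appending the merge cell of $\beta_b$ leaves all parts unchanged. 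These propagate the invariant from $w$ to $wb$, the empty word giving the canonical partition itself. Finally, combining the two steps: if $\mathscr P_i=(A_\ell)g$ and $m_bb^k\in A_\ell\cap M\langle B\rangle$, then $(m_bb^k)g=m_b(b^kg)\in(A_\ell)g=\mathscr P_i$ with $b^kg\in\langle B\rangle$, so $\mathscr P_i\cap M\langle B\rangle\neq\emptyset$; hence $\{w_x:x\in\supt(B)\}\subseteq\{w_x:x\in M\langle B\rangle\}$ and the corollary follows.

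The hard part will be the induction just sketched — specifically, verifying that when $\beta_b$ is appended the only parts that get refined are the parts of split-only paths, so that the Claim applies and keeps the invariant true. The key point is that $\beta_b$ can refine a part only at its split cell, whose incoming strand is the source $A_{i(b)}$, and that no maximal path terminating at a source can end in a merge (merges terminate at destinations, and sources and destinations are distinct canonical parts), so any such path is split-only; the Claim then identifies its part as $(A_{i(b)})u^{-1}$ before the split and as the pieces $(A_r)u'^{-1}$ afterwards. Alternatively one can avoid the running induction and work one maximal path at a time: since stages strictly increase along any maximal path, at the stage just before $p_i$'s first merge is appended the split-only initial segment of $p_i$ is already a maximal path whose part equals the final $\mathscr P_i$, and the Claim realises $\mathscr P_i$ as $(a_2)u^{-1}$ there.
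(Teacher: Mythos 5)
Your proof is correct and follows the same high-level strategy as the paper's --- both reduce, via Proposition 3.1, to showing that every part of the canonical refinement of $w$ meets the orbit $M\langle B\rangle$ --- but you establish that key fact by a different mechanism. The paper notes that the breakpoints of the canonical refinement already lie in $M\langle B\rangle$ and, for a part that does not contain its boundary, appeals to a further subdivision ``via a split'' to push breakpoints belonging to $M\langle B\rangle$ into the interior of the part. You instead identify each part exactly: applying the Claim inside the proof of Proposition 3.1 to the split-only initial segment of the corresponding maximal path, each part is a group translate $(A_\ell)u^{-1}$ of a canonical part, and each canonical part contains a point of the marker orbit because the orbit of the marker of the relevant bump accumulates at the endpoints of its support and steps through its fundamental domains one at a time. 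This is more explicit than the paper's one-line appeal to ``how canonical refinements are constructed,'' and it has the incidental advantage of working directly with an arbitrary witnessing marking $M$ rather than first reducing to the canonical one; the price is the extra bookkeeping needed to check that only split-only paths get refined when a generator diagram is appended, which you handle correctly (merges output destinations, splits input sources, and feet are disjoint). You also rightly flag the one genuine edge case that the paper passes over silently: points outside $\supt(B)$ contribute the empty reduction to $L(w)$, which is not attained on $M\langle B\rangle\subseteq\supt(B)$, so the stated equality should be read modulo the trivial reduction (harmless for the corollary's use in Proposition 3.4).
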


\begin{proof}
    The set of simply local reductions is independent of choice of marking so we may choose $M$ to be canonical. Consider the canonical refinement $\mathscr{R}$ of $w$ and its reduced strand diagram $S$. The refinement is defined by some finite subset $R\subset M\langle B\rangle$ and by Proposition 3.1 there is a surjection $\mathscr{R}\rightarrow L(w)$ such that if $x\in\mathscr{P}\in\mathscr{R}$ then $\mathscr{P}\mapsto w_x$. If one of the parts $\mathscr{P}$ does not contain its boundary then we can see from how canonical refinements are constructed that we may further subdivide this part via a split so that the breakpoints, contained in $\mathscr{P}$, are elements of $M\langle B\rangle$.
\end{proof}

\begin{prop}
    The map $\delta: \langle B\rangle\rightarrow D(\mathcal{P},a_1\ldots a_n)$ is a homomorphism.
\end{prop}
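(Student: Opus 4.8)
The plan is to build the homomorphism in the standard way: first extend $\delta$ to the free group on $B$, then check that it kills the kernel of the surjection onto $\langle B\rangle$. Explicitly, set $b\delta=\beta_b$ and $b^{-1}\delta:=\beta_b^{-1}$ for $b\in B$, and for a word $w=c_1\cdots c_k\in(B^{\pm})^*$ put $w\delta=(c_1\delta)\circ\cdots\circ(c_k\delta)$. Since concatenation of diagrams is associative with identity $\varepsilon_{a_1\cdots a_n}$, this is a monoid homomorphism $(B^{\pm})^*\to D(\mathcal{P},a_1\cdots a_n)$, and since $\beta_b\circ\beta_b^{-1}=\varepsilon_{a_1\cdots a_n}=\beta_b^{-1}\circ\beta_b$ it factors through the free group on $B$. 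It therefore suffices to prove that $w\delta=\varepsilon_{a_1\cdots a_n}$ whenever $w\in(B^{\pm})^*$ represents the trivial homeomorphism; granting this, $\delta$ descends to a (necessarily unique) homomorphism $\langle B\rangle\to D(\mathcal{P},a_1\cdots a_n)$, with multiplicativity inherited from $(B^{\pm})^*$.

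So suppose $w$ represents the identity, and let $S$ be the reduced strand diagram of $w\delta$; I claim $S$ has no interior vertices, whence $S=\varepsilon_{a_1\cdots a_n}$. Every interior vertex of $S$ lies on some maximal path, and, since reflecting a cell leaves the relation (hence the bump) labelling it unchanged, the bump vertex label of every maximal path is a word in $B$, not merely in $B^{\pm}$; thus $S$ is trivial as soon as every maximal path of $S$ has empty bump vertex label. Now the argument of Proposition~3.1, extended to allow the diagrams $\beta_b^{-1}$ in the induction, yields an interval partition $\mathscr{P}_1,\dots,\mathscr{P}_k$ of $\supt(B)$ in left-to-right bijection with the maximal paths $p_1,\dots,p_k$ of $S$ such that, for $x\in\mathscr{P}_i$, the bump vertex label of $p_i$ is the local reduction $(w_x)^{\vee}$ (cancellations in $w_x$ now corresponding to dipoles that are removed in passing to $S$), while $\mathscr{P}_i\subseteq a$ and $\mathscr{P}_i w\subseteq a'$ whenever $p_i$ has boundary labels $a,a'$. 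Since $w$ fixes $\mathscr{P}_i$ pointwise we get $\mathscr{P}_i\subseteq a\cap a'$, forcing $a=a'$ (the parts of the canonical partition being pairwise disjoint intervals); more to the point, everything reduces to showing that the only local reduction of $w$ is the empty word.

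For a positive word this is immediate: if $w$ is nonempty then the (positive) bump that is its first letter moves every point of its orbital strictly to the right, and no later positive letter moves anything left, so $w$ moves every such point and is not the identity. The general case is the step I expect to be the main obstacle; it rests on the ping-pong dynamics of geometrically fast sets from~\cite{fast}. Specifically, what is needed is that a nonempty word $v$ which is locally reduced at a point $x$ cannot fix $x$. The idea is that local reducedness forces each successive letter of $v$ to push the current point into a new one among the pairwise disjoint feet of $B$, so that $v$ moves $x$ by a nontrivial amount. Since $(w_x)^{\vee}$ is locally reduced at $x$ and satisfies $x(w_x)^{\vee}=xw=x$ (because $w$ is the identity), this assertion forces $(w_x)^{\vee}$ to be the empty word for every $x$. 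Hence every maximal path of $S$ has empty bump vertex label, $S$ has no interior vertices, and $w\delta=\varepsilon_{a_1\cdots a_n}$, as required.
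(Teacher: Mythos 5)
Your overall strategy is the same as the paper's: extend $\delta$ to the free group on $B$, and reduce everything to showing that every local reduction of a word $w$ representing the identity is the empty word, via the correspondence between maximal paths of the reduced strand diagram and parts of an interval partition (the paper packages this as Proposition~3.1 together with Corollaries~3.2 and~3.3, and, like you, tacitly applies them to words with inverses). The genuine gap is in your dynamical step. The assertion that a nonempty word $v$ that is locally reduced at a point $x$ cannot fix $x$ is \emph{false} for arbitrary $x\in I$. Take two positive bumps $f,g$ in the Thompson's group $F$ configuration, say $\supt(f)=(0,0.6)$ and $\supt(g)=(0.3,1)$ with markers $0.1$ and $0.4$, so the feet are $(0,0.1)$, $[0.5,0.6)$, $(0.3,0.4)$, $[0.9,1)$ and the set is fast. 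For $x=0.35$ one has $xf\in(0.5,0.6)$ and $xg\in(0.35,0.9)$, so nothing prevents choosing $f$ and $g$ with $xf=xg=0.55$; then $fg^{-1}$ is freely reduced, each letter moves the current point, and yet $x\,fg^{-1}=x$. Note that the orbit of $x$ visits $\src(g)$, then $\dest(f)$, then returns to $\src(g)$, so the heuristic that local reducedness pushes the point into a \emph{new} foot at every step is exactly what breaks down.

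The repair is the extra step the paper takes and that your sketch omits. Since the local reduction $(w_x)^\vee$ is constant on each part of the canonical refinement, it suffices to verify the claim at one representative point per part, and Corollary~3.3 shows these representatives can be chosen in the marker orbit $M\langle B\rangle$. Such points have finite history, and for finite-history points Lemma~5.7 of \cite{fast} gives \emph{uniqueness} of the locally reduced word $u$ with $xu=y$ for each $y$ in the orbit of $x$; comparing $(w_x)^\vee$ with the empty word (both locally reduced at $x$, both sending $x$ to $x$) forces $(w_x)^\vee=1$. Without restricting to finite-history points, the ping-pong statement you rely on is not available, so as written your argument does not close.
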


\begin{proof}
    Let $w$ be a word on $B$ such that $w\equiv 1$ and consider the image $w\delta$. We want to show that the reduced strand diagram $S$ of $w\delta$ is trivial. By Corollary 3.2 it suffices to show that $L^\vee(w)=\{1\}$ and, by Corollary 3.3, it then suffices to show $w^\vee_x=1$ for $x\in M\langle B\rangle$.
    
    Consider $x\in M\langle B\rangle$. Since markers have trivial history $x$ must have finite history. By \cite[Lemma~5.7]{fast} if $y$ is a point in the orbit of $x$ then there must be precisely one word $u$ locally reduced at $x$ such that $xu=y$. By assumption we have $xw^\vee_x=xw=x$ and we therefore conclude that $w^\vee_x=1$.
\end{proof}

\noindent
The following lemma, which will be useful for the proof of $\delta$ being surjective, describes in detail the possible labellings of the edges and cells incident to a given vertex in a diagram over $\mathcal{P}$. For such a vertex $v$ notice that the order on the sets $I^\Pi(v)$ and $O^\Pi(v)$ must consist of a (possibly empty) sequence of positive cells followed by a (possibly empty) sequence of negative cells. As such there exists an edge $e\in I(v)$, and similarly for $O(v)$, which marks the point where the sequence $I^\Pi(v)$ changes from positive to negative, and we will refer to $e$ as the \textbf{inflection edge} of $I^\Pi(v)$.

\begin{lemma}\label{lemma:vertex}
    Let $\Delta$ be a reduced $(a_1\ldots a_n,a_1\ldots a_n)$-diagram over $\mathcal{P}$ and consider $v\neq i(\Delta),t(\Delta)$ one of its vertices of degrees at least three. Then there exists $k$ such that $a_{k-1}$ is the label of the first edge in $I(v)$ and $a_k$ is the label of the first edge in $O(v)$. Further,
    \begin{enumerate}
        \item if $a_k$ is neither a source nor destination then $a_{k-1}$ must be a source and the sequence of labels on $I(v)$ is $a_{k-1},a_k$ while the sequence of labels on $O(v)$ is $a_k,a_{k+1}$ and the singletons $I^\Pi(v),O^\Pi(v)$ are positive and negative respectively;
        \item if $a_{k-1}$ is neither a source nor destination then $a_k$ must be a destination and the sequence of labels on $O(v)$ is $a_k,a_{k-1}$ while the sequence of labels on $I(v)$ is $a_{k-1},a_{k-2}$ and the singletons $I^\Pi(v),O^\Pi(v)$ are negative and positive respectively;
        \item if $a_{k-1}$ is a destination and $a_k$ is a source then every $e\in I(v)$ is labelled $a_{k-1}$ while every $e\in O(v)$ is labelled $a_k$ and for both $I^\Pi(v)$ and $O^\Pi(v)$ the inflection edge occurs at either the first edge or the last;
        \item if $a_{k-1}$ is a source then let $\mathcal{T}_m$ be the maximal stretched transition such that $a_{k-1}\in S(\mathcal{T}_m)$. Then there exists two stretched transition chains $\mathcal{T}_1,\mathcal{T}_2$ such that $\mathcal{T}_1$ starts at $a_{k-1}$ and the sequence of labels on $I(v)$ in order is $\overrightarrow{S(\mathcal{T}_1)}$, followed by some number $m_1\geq 0$ of $a_{m-1}$ if $a_m=d(\mathcal{T}_1)=d(\mathcal{T}_m)$, where the first or the last may be the inflection edge, and then $\overleftarrow{S(\mathcal{T}_2)}$ where $\mathcal{T}_2$ ends at $d(\mathcal{T}_1)$, the first of which is the inflection edge if it has not yet occurred, while
        \begin{itemize}
            \item if $\mathcal{T}_2\subseteq\mathcal{T}_1$ the sequence of labels on $O(v)$ is some number $m_2\geq 1$ of $a_k$ (if $a_k$ is a destination then $m_2=1$), either the first or the last one being the inflection edge, followed by $\overrightarrow{D(\mathcal{T}_1\setminus\mathcal{T}_2)}$ but;
            \item if $\mathcal{T}_1\subseteq\mathcal{T}_2$ then $O(v)$ is some number $m_2\geq 1$ of $a_k$ (if $a_k$ is a destination then $m_2=1$) followed by $\overleftarrow{D(\mathcal{T}_2\setminus\mathcal{T}_1)}$ where the last may be the inflection edge and, finally, some number $m_3\geq 0$ of $a_{m+1}$ if $a_m=s(\mathcal{T}_2)=s(\mathcal{T}_m)$, the first or the last being the inflection edge if it has not yet occurred;
        \end{itemize}
        \item if $a_k$ is a destination then let $\mathcal{T}_m$ be the stretched transition chain such that $a_k\in D(\mathcal{T}_m)$. Then there exists two stretched transition chains $\mathcal{T}_1,\mathcal{T}_2$ such that $\mathcal{T}_1$ ends at $a_k$ and the sequence of labels on $O(v)$ in order is $\overleftarrow{D(\mathcal{T}_1)}$, followed by some number $m_1\geq 0$ of $a_{m+1}$ if $a_m=s(\mathcal{T}_1)=s(\mathcal{T}_m)$, where the first or the last may be the inflection edge, and then $\overrightarrow{D(\mathcal{T}_2)}$ where $\mathcal{T}_2$ begins at $s(\mathcal{T}_1)$, the first of which is the inflection edge if it has not yet occurred, while 
        \begin{itemize}
            \item if $\mathcal{T}_2\subseteq\mathcal{T}_1$ the sequence of labels on $I(v)$ is some number $m_2\geq 1$ of $a_{k-1}$ (if $a_{k-1}$ is a source then $m_2=1$), either the first or the last one being the inflection edge, followed by $\overleftarrow{S(\mathcal{T}_1\setminus\mathcal{T}_2)}$ but;
            \item if $\mathcal{T}_1\subseteq\mathcal{T}_2$ then $I(v)$ is some number $m_2\geq 1$ of $a_{k-1}$ (if $a_{k-1}$ is a source then $m_2=1$) followed by $\overrightarrow{S(\mathcal{T}_2\setminus\mathcal{T}_1)}$ where the last may be the inflection edge and, finally, some number $m_3\geq 0$ of $a_{m-1}$ if $a_m=d(\mathcal{T}_2)=d(\mathcal{T}_m)$, the first or the last being the inflection edge if it has not yet occurred.
        \end{itemize}
    \end{enumerate}
\end{lemma}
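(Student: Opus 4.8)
The plan is to establish this as a local statement about $\Delta$ in a neighbourhood of $v$, by a case analysis driven by three rigidity features: the very restricted shape of the relations of $\mathcal{P}$ --- each has a single generator on one side, namely a source $A_{i(b)}$ expanded on the right by its fundamental domain $G(b)$, or a destination $A_{j(b)}$ expanded on the left; the planarity of $\Delta$, which (as recorded in Section~\ref{sec:Preliminaries}) makes $I(v)$ and $O(v)$ cyclically connected arcs with each cell meeting $v$ occupying a single wedge there; and the hypothesis that $\Delta$ is reduced, which --- as $\mathcal{P}$ is tree-like --- forbids in particular a merge immediately preceding a split at $v$.

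\textbf{Step 1: the anchor.} First I would pin down $k$. Let $i_1$ be the first edge of $I(v)$ and $o_1$ the first edge of $O(v)$; these are cyclically adjacent at $v$, and the face $F$ in the wedge between them is either the unbounded face or a cell $\pi$. If $F$ is the unbounded face, then $v$ is interior to the top or to the bottom boundary path of $\Delta$ and $i_1,o_1$ are the two boundary edges at $v$; if $F=\pi$, then $v$ is neither $i(\pi)$ nor $t(\pi)$ --- those would make $i_1,o_1$ both outgoing, respectively both incoming --- so $v$ is interior to $\lceil\pi\rceil$ or to $\lfloor\pi\rfloor$ and $i_1,o_1$ are the two edges of that path at $v$. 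In either case $i_1o_1$ is a length-two subpath of a positive path whose label is a contiguous subword of $a_1\ldots a_n$ (the top or bottom boundary word of $\Delta$, or one of $A_{i(b)}G(b)$, $G(b)A_{j(b)}$), so $i_1$ and $o_1$ carry consecutive labels $a_{k-1}$ and $a_k$.

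\textbf{Step 2: reading the link of $v$.} With $k$ fixed I would split on the types of $a_{k-1}$ and $a_k$ --- source, destination, or neither, recalling that the generators of neither type are precisely the $k$ gaps inserted for isolated bumps, that a source $A_{i(b)}$ is a one-letter side of a relation only in $A_{i(b)}=A_{i(b)}G(b)$ (with $G(b)$ immediately to its right), and dually for a destination. In every case the method is to walk around $v$, reading the cells of $I^\Pi(v)$ clockwise from the inflection edge and then those of $O^\Pi(v)$: at each step the requirements that consecutive cells share an edge, that the two boundary paths of each cell carry consecutive subwords of $a_1\ldots a_n$, and that no dipole is formed pin down the next cell essentially uniquely, so the sequence of edge labels around $v$ is forced. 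For cases (1)--(2) (a generator of neither type present) the walk closes up at once, yielding the claimed singleton $I^\Pi(v)$ and $O^\Pi(v)$ with the short edge sequences, and forcing the other generator to be a source, resp.\ a destination. For case (3) the adjacency $\dest(b')=a_{k-1}$, $\src(b)=a_k$ forces the supports of $b'$ and $b$ merely to abut, so that only the destination relation of $b'$ and the source relation of $b$ can act at $v$; the walk then gives every edge of $I(v)$ the label $a_{k-1}$ and every edge of $O(v)$ the label $a_k$, with the inflection edges of $I^\Pi(v)$ and $O^\Pi(v)$ at the first or last cell, since the positive and negative stacks can only meet along an $a_{k-1}$- resp.\ $a_k$-labelled edge.

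\textbf{Step 3: the transition-chain cases, and the main obstacle.} Cases (4) and (5) are mirror images of one another under left-to-right reflection, so I would treat (4): $a_{k-1}$ is a source in $S(\mathcal{T}_m)$, where $\mathcal{T}_m$ is the maximal stretched transition chain containing it. The key structural input is the explicit description of the canonical partition in a neighbourhood of $\mathcal{T}_m$, which --- together with reducedness --- determines exactly which relations can meet $v$ and along which edges. Running the clockwise walk of Step~2 along $I(v)$ then reads off $\overrightarrow{S(\mathcal{T}_1)}$ for a subchain $\mathcal{T}_1$ starting at $a_{k-1}$, then a (possibly empty) run of copies of $a_{m-1}$ present exactly when $\mathcal{T}_1$ reaches $d(\mathcal{T}_m)$, then $\overleftarrow{S(\mathcal{T}_2)}$ for a subchain $\mathcal{T}_2$ ending where $\mathcal{T}_1$ does; the two bulleted alternatives for $O(v)$ record the two ways the chain can turn around at $v$ --- either $\mathcal{T}_2\subseteq\mathcal{T}_1$, leaving the trailing $\overrightarrow{D(\mathcal{T}_1\setminus\mathcal{T}_2)}$, or $\mathcal{T}_1\subseteq\mathcal{T}_2$, leaving the trailing $\overleftarrow{D(\mathcal{T}_2\setminus\mathcal{T}_1)}$ and then a run of copies of $a_{m+1}$ when $\mathcal{T}_2$ reaches $s(\mathcal{T}_m)$ --- while the leading $m_2\geq 1$ copies of $a_k$ count the cells stacked before the turn, forced down to $m_2=1$ precisely when $a_k$ is a destination. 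The inflection-edge bookkeeping drops out of the fact that $I^\Pi(v)$ and $O^\Pi(v)$ are each a block of positive cells followed by a block of negative cells. The main obstacle is exactly the verification that this walk is forced: one must exclude every alternative gluing, which demands careful tracking of how the supports of the bumps of $\mathcal{T}_m$ nest and overlap (governed by the transition-chain inequalities), of which relations are positive or negative as one crosses an inflection edge, and of the degenerate cases (subchains of length one, and $a_k$ itself a destination). Cases (1)--(3) are comparatively routine once Step~1 is in hand.
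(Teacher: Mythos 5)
Your overall strategy coincides with the paper's: both treat the lemma as a forced local analysis of which cells of the tree-like presentation $\mathcal{P}$ can be glued around $v$, driven by the fact that each generator occurs as the one-letter side of at most one source relation and at most one destination relation, by planarity, and by reducedness. Your Step~1 is a clean (and arguably tidier) alternative to the paper's route to the anchor $k$: the paper decomposes $\Delta=\Delta_1\circ\Delta_2$ into positive and negative halves and runs a recursive argument down from the top path, whereas you read the consecutive labels $a_{k-1},a_k$ off the face in the wedge between the first edges of $I(v)$ and $O(v)$. Cases (1)--(3) are handled the same way in both arguments.

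The genuine shortfall is in cases (4)--(5), which is where essentially all of the content of the lemma lives. You correctly describe what the answer must look like and name the ingredients, but "one must exclude every alternative gluing, which demands careful tracking" is a restatement of the problem, not an argument, and you never supply the lever that does the excluding. In the paper's proof that lever is a closing-up condition: the analysis of $I(v)$ alone only shows that the incoming labels run forward along $\overrightarrow{S(\mathcal{T}_1)}$, possibly stall at $a_{m-1}$, and come back along $\overleftarrow{S(\mathcal{T}_2)}$; what determines $O(v)$ is the observation that the \emph{last} edge of $I(v)$ and the \emph{last} edge of $O(v)$ must together form a length-two subpath of $\lfloor\Delta\rfloor$ or of the top path of a single cell, hence carry consecutive labels $a_l,a_{l+1}$, after which one walks back up through a forced chain of negative cells (using that $a_{s+1}\in D(\mathcal{T}_m)$ whenever $a_s\in S(\mathcal{T}_m)$ with $a_s\neq s(\mathcal{T}_m)$) to recover the two bulleted shapes of $O(v)$ and the positions of the inflection edges. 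Ironically, your Step~1 wedge argument applied to the \emph{bottom} wedge at $v$ would give you exactly this consecutive-labels constraint, but you never deploy it where it is needed. Without it, nothing in your outline rules out, say, $O(v)$ terminating in an arbitrary run of destination labels unrelated to $\mathcal{T}_1$ and $\mathcal{T}_2$, so the two-alternative dichotomy for $O(v)$ remains unproved.
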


\begin{proof}
    Recalling that, since $\mathcal{P}$ is tree-like, $\Delta=\Delta_1\circ\Delta_2$ for $\Delta_1$ consisting only of positive cells and $\Delta_2$ consisting only of negative, notice that every vertex of $\Delta$ is contained in $\Delta_1$. So, consider a vertex $v$ of $\Delta_1$ of degree at least three which is neither the initial nor terminal vertex. Then $v$ must be either be initial or terminal for some positive cell and since the top path has label $a_1\ldots a_n$ it follows by a recursive argument that there exists $k$ such that the first edge in $I(v)$ is labelled $a_{k-1}$ and the first edge in $O(v)$ is labelled $a_k$. A fact we will use implicitly throughout this proof is that for a given $a_i$ there is at most one relation of the form $(a_h,a_h\ldots a_i)$ and at most one of the form $(a_j,a_i\ldots a_j)$. For each case that follows we consider what diagram could be built beginning from a vertex with one incoming edge and one outgoing edge.

    \begin{enumerate}
        \item Since $a_k$ is not a foot it must be in the support of an isolated bump and so the only relations it appears in which could add an edge incident to $v$ are $(a_{k-1},a_{k-1}a_k)$ and $(a_{k+1},a_ka_{k+1})$, and we achieve this by adding a positive cell along $a_{k-1}$ and a negative cell along $a_ka_{k+1}$. Any way of adding further edges incident to $v$ will produce dipoles.
        \item Symmetric to (2).
        \item Since $a_{k-1}$ is a destination, the relation here has the form $(a_{k-1},a_j\ldots a_{k-1})$ for some $j$ and so we may attach a positive or negative cell of this form, which introduces a second incoming edge labelled $a_{k-1}$, and now we may repeat this to add an arbitrary number of such edges. Similarly, we can add an arbitrary number of outgoing edges labelled $a_k$. In order to add edges with a different label incident to $v$, we would require a relation of the form $(a_i,a_i\ldots a_{k-1})$ or $(a_i,a_k\ldots a_i)$ for some $i$, but the first implies that $a_k$ is a destination and the second implies that $a_{k-1}$ is a source.
        \item If positive cells are to be attached so as to add edges incident to $v$ they must come before any negative cells. Since $a_{k-1}$ is a source we have a relation $(a_{k-1},a_{k-1}\ldots a_j)$ where $a_j$ is the successor of $a_{k-1}$ in $\overrightarrow{S(\mathcal{T}_m)}$; if $a_{k-1}$ is maximal then $a_j=a_{m-1}$ is a destination, where $a_m=d(\mathcal{T}_m)$. As such we may attach a positive cell at $a_{k-1}$ and then at $a_j$ and so on, as far through $\overrightarrow{S(\mathcal{T}_m)}$ as we like. If we reach the end then $a_{m-1}$ is the label of the last edge added to $I(v)$ and it is then possible to add arbitrarily many edges labelled $a_{m-1}$ to $I(v)$ by attaching either that many positive cells or that many negative cells at $a_{m-1}$ (it cannot be a combination of positive and negative since they would cancel as dipoles). Whether we add those cells or not, we may then attach a sequence negative cells to add incoming edges to $v$ labelled by an interval of $\overleftarrow{S(\mathcal{T}_m)}$, depending where on $\mathcal{T}_m$ we started and where we stop (if we did not reach the end of $\mathcal{T}_m$ or did not add edges labelled $a_{m-1}$ to $I(v)$ this is still possible since there may be a cell added along the bottom path of any positive cell with top path from $S(\mathcal{T}_m)$ which does not add an edge to $I(v)$ whence attaching the inverse of such a positive cell would not form a dipole).

        In order for this diagram to be a $(a_1\ldots a_n,a_1\ldots a_n)$-diagram the last edge of $O(v)$ must be labelled $a_{l+1}$ if the last edge of $I(v)$ is labelled $a_l$ - if this pair of edges doesn't form part of the bottom path of the diagram then there must be a cell that contains it as a subpath of its top path. We look at each possibility for $a_l$ mentioned in the previous paragraph and consider the cells which must therefore be added. First notice that if $a_k$ is a source we have a relation of the form $(a_k,a_k\ldots a_j)$ for some $j$ we may add arbitrarily many positive cells or negative cells of this form. Suppose that $a_l\in S(\mathcal{T}_m)$ and this edge is in the bottom path of a positive cell, then this cell is of the form $(a_s,a_s\ldots a_l)$ where $a_s$ is the predecessor of $a_m$ in $\overrightarrow{S(\mathcal{T}_m)}$ and we therefore have a relation of the form $(a_{l+1},a_{s+1}\ldots a_{l+1})$ - indeed, we have the relation $(a_{s+1},a_{t+1}\ldots a_{s+1})$ wherever $a_t$ is the predecessor of $a_s$ in $\overrightarrow{S(\mathcal{T}_m)}$. If the edge $a_{l+1}$ was in the bottom path of a positive cell then it must have the form $(a_j, a_{m+1}\ldots a_j)$ for some $j>k$ and $a_j$ would be the label of the predecessor in $O(v)$ and must also be in the bottom path of a positive cell of the form $(a_{j'}, a_j\ldots a_{j'})$ for $j'>j>k$. Continuing this argument indefinitely we can see that this is not possible since the first element of $O(v)$ is $a_k$ and, so, the edge labelled $a_{l+1}$ must be the bottom path of a negative cell and it must have the form $(a_{l+1},a_{s+1}\ldots a_{l+1})$. We can now apply the same argument to $a_{s+1}$ to see it must be the bottom path of a negative cell of the form $(a_{s+1},a_{t+1}\ldots a_{s+1})$ and so on until we reach the edge labelled $a_k$. Noticing that $a_{s+1}\in D(\mathcal{T}_m)$ when $a_s\in S(\mathcal{T}_m)$ and $a_s\neq s(\mathcal{T}_m)$ completes the proof for this case. If $a_l=a_{m-1}$ where $a_m=d(\mathcal{T}_m)$ then $a_{l+1}=d(\mathcal{T}_m)$ and the argument follows the same way. If $a_l\in S(\mathcal{T}_m)$ and this edge is the bottom path of a negative cell such that $m\geq k-1$ then the argument is the same; otherwise (i.e. if $m<k-1$) a symmetric argument shows that we must add a sequence of positive cells which add edges to $O(v)$ labelled by an interval from $\overleftarrow{D(\mathcal{T}_m)}$ in order to get the final edge to have label $a_{l+1}$. Finally, if $a_l=a_m=s(\mathcal{T}_m)$ then $a_{l+1}$ is a source and we may attach arbitrarily many cells of the form $(a_{l+1},a_{l+1}\ldots a_j)$ for some $j$ (either all positive or all negative).
        \item Symmetric to (4)
        \end{enumerate}
\end{proof}

\begin{prop}
    The map $\delta:\langle B\rangle\rightarrow D(\mathcal{P},a_1\ldots a_n)$ is an isomorphism.
\end{prop}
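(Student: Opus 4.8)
The plan is to show that the homomorphism $\delta$ is both injective and surjective. Injectivity is quick. Suppose $w$ is a word over $B^\pm$ representing an element of $\ker\delta$, so the reduced strand diagram $S$ of $w\delta$ is trivial; then every maximal path of $S$ has empty bump‑vertex‑label, so $BP(S)=\{1\}$, and by the corollary asserting $L^\vee(w)=\{u^\vee\mid u\in BP(S)\}$ we get $L^\vee(w)=\{1\}$. Hence for each $x\in I$ the local reduction $(w_x)^\vee$ is empty, and since $xw=xw_x$ (definition of the simply local reduction) and $xw_x=x(w_x)^\vee$ (free reduction does not change the element), $w$ fixes every point of $I$. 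As $\langle B\rangle\le\Homeo_+(I)$ this forces $w=1$ in $\langle B\rangle$, so $\delta$ is injective.

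For surjectivity, note that the image of $\delta$ is a subgroup of $D(\mathcal{P},a_1\ldots a_n)$ containing every generator diagram $\beta_b$, so it suffices to prove that $\{\beta_b\mid b\in B\}$ generates $D(\mathcal{P},a_1\ldots a_n)$. I would argue by induction on the number of cells of a reduced $(a_1\ldots a_n,a_1\ldots a_n)$‑diagram $\Delta$. If $\Delta$ has no cells it is the trivial diagram. Otherwise, recall from the proof of Lemma~\ref{lemma:vertex} that, $\mathcal{P}$ being tree‑like, $\Delta=\Delta_+\circ\Delta_-$ with $\Delta_+$ built only from expanding (``positive'') cells and $\Delta_-$ only from contracting (``negative'') ones; since expanding cells strictly lengthen a word and contracting cells strictly shorten it while $\Delta$ starts and ends at $a_1\ldots a_n$, the part $\Delta_+$ is nontrivial, so $\Delta$ has an expanding cell attached along $\lceil\Delta\rceil$. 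Because the generators on the short side of an expanding relation of $\mathcal{P}$ are precisely the sources and destinations of the bumps of $B$, every such cell is attached along an edge that is a source or a destination.

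The core claim is that the \emph{top layer} of $\Delta$ — the subdiagram consisting of all cells meeting $\lceil\Delta\rceil$ — is a concatenation $\Theta=\beta_{b_1}^{\varepsilon_1}\circ\cdots\circ\beta_{b_m}^{\varepsilon_m}$ of generator diagrams with pairwise disjoint supports (so $\Theta$ is again an $(a_1\ldots a_n,a_1\ldots a_n)$‑diagram) and that $\Delta=\Theta\circ\Delta'$. This is established by applying Lemma~\ref{lemma:vertex} at the vertices of $\Delta$ lying on or immediately below $\lceil\Delta\rceil$: the lemma pins down the labels of all edges and cells incident to such a vertex, and in each relevant case one reads off that the expanding cell attached there is the ``split'' half of some $\beta_b$ or $\beta_b^{-1}$ whose ``merge'' half is forced to sit beside it along $\lceil\Delta\rceil$, the sign $\varepsilon_i$ being $+1$ when the split lies over a source and $-1$ when it lies over a destination. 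Removing $\Theta$ leaves a diagram $\Delta'$ with $\lceil\Delta'\rceil=\lfloor\Delta'\rfloor=a_1\ldots a_n$, which is reduced because a dipole of $\Delta'$ would be a dipole of $\Delta$, and which has $2m\ge 2$ fewer cells than $\Delta$ since each $\beta_{b_i}^{\varepsilon_i}$ has exactly two cells. By the inductive hypothesis $\Delta'$, and hence $\Delta=\Theta\circ\Delta'$, lies in $\langle\beta_b\mid b\in B\rangle$. This yields surjectivity, and with injectivity the statement that $\delta$ is an isomorphism.

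The main obstacle is the case analysis needed to extract $\Theta$. Lemma~\ref{lemma:vertex} describes the neighbourhood of a degree‑$\ge 3$ vertex in terms of stretched transition chains $\mathcal{T}_1,\mathcal{T}_2$ and permits configurations considerably more intricate than a single $\beta_b$, including the degenerate behaviour of the empty transition chain at sources and destinations; one must verify, case by case, that exactly these configurations are the ones produced by a disjoint‑support product of generator diagrams, and in particular that no new dipole appears when $\Theta$ is split off from the top. A secondary nuisance is the bookkeeping with the fundamental‑domain words $G(b)$ — one must confirm that the edges created by the expanding cell of a $\beta_b$ are precisely those absorbed by its contracting cell — which is where the explicit shape of the relations $a_{i(b)}=a_{i(b)}G(b)$ and $G(b)a_{j(b)}=a_{j(b)}$ is used.
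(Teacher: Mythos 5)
Your injectivity argument is valid and genuinely different from the paper's: you read triviality of the reduced strand diagram through the identity $L^\vee(w)=\{u^\vee\mid u\in BP(S)\}$ and conclude that $w$ fixes every point, whereas the paper argues combinatorially that the unreduced product $\beta_{i_1}^{\epsilon_1}\circ\cdots\circ\beta_{i_m}^{\epsilon_m}$ of a freely reduced non-identity word must retain at least two cells after dipole cancellation. Your route is shorter and arguably cleaner, but note that Proposition~3.1 and its corollary are stated only for positive words $w\in B^+$, so you owe an extension to words over $B^\pm$ before applying them to a kernel element.

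The surjectivity argument has a genuine gap: the core claim, that the cells of a reduced diagram $\Delta$ meeting $\lceil\Delta\rceil$ form a concatenation $\Theta$ of generator diagrams with disjoint supports and $\Delta=\Theta\circ\Delta'$, is false. In the two-bump presentation $\langle A,B,C,D\mid A=AB,\ BC=C,\ B=BC,\ CD=D\rangle$ of Section~3, the diagram $\Delta=\beta_f\circ\beta_g^{-1}$ is reduced (every candidate dipole fails on path length or label), and its top layer consists of $\pi_{A,AB}$, $\pi_{BC,C}$ and $\pi_{D,CD}$: the first two form $\beta_f$, but $\pi_{D,CD}$ is the expanding half of $\beta_g^{-1}$ whose contracting partner $\pi_{BC,B}$ lies strictly below the top layer. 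Your $\Theta$ is then an $(ABCD,ABCCD)$-diagram, not a $(w,w)$-diagram, so no factorization $\Delta=\Theta\circ\Delta'$ exists. In this example one can still peel a single generator off the top, but in general no complete generator diagram need appear along the boundary of a reduced diagram at all; this is precisely why the paper peels one generator at a time from the bottom and, when every bottom cell is negative and some cell is unpaired, first passes to an \emph{equivalent non-reduced} diagram by inserting a dipole, running the induction on a secondary complexity (the number of boundary cells not yet forming a generator diagram) rather than on the cell count alone. That dipole-insertion step is the essential idea missing from your outline; without it, the case analysis you defer to Lemma~\ref{lemma:vertex} cannot close, because the vertex configurations the lemma permits are strictly more general than those arising from a disjoint-support product of generator diagrams.
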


\begin{proof}
    Let $g=b_{i_1}^{\epsilon_1}\ldots b_{i_m}^{\epsilon_m}\in\langle B\rangle\setminus\{1\}$ for $\epsilon_i=\pm1$ be a freely reduced word. Then consider $\Delta=g\delta=\beta_{i_1}^{\epsilon_1}\circ\ldots\circ\beta_{i_m}^{\epsilon_m}$ and choose one of its dipoles. This dipole consists of two cells $\pi_j,\pi_k$ such that $\pi_j$ is a cell of a factor $\beta_{i_j}^{\epsilon_{j}}$ and $\pi_k$ is a cell of a factor $\beta_{i_k}^{\epsilon_{k}}$ for some $j<k$. Since each relation of $\mathcal{P}$ occurs in precisely one of the diagrams $\beta_i$ we must have $\beta_{i_j}=\beta_{i_k}$ and $\epsilon_{j}=-\epsilon_{k}$.
    
    Our aim is to reduce $\Delta$ and see that it is non-trivial. Consider a factor $\beta_{i_k}^{\epsilon_k}$ in the unreduced product above. If either of its cells remain after reducing then we're done, so suppose both are reduced. Notice that if the two dipoles are both formed with the cells of a single factor equal to $\beta_{i_k}^{-\epsilon_k}$ then there is a word equivalent to $g$ containing $b_{i_j}^{\epsilon_j}b_{i_k}^{\epsilon_k}$ as a subword and this forms a cancellable pair. Thus there must exist a factor where this does not happen; otherwise, $g$ would be equivalent to the empty word. So let $\beta_{i_k}^{\epsilon_k}$ be such a factor; there must be two distinct factors equal to $\beta_{i_k}^{-\epsilon_k}$ which each reduce with one of the cells of $\beta_{i_k}^{\epsilon_k}$ and we are thus left with a cell from each factor $\beta_{i_k}^{-\epsilon_k}$. These remaining cells cannot reduce with each other since they are labelled by different relations. Since the factor we chose with this property was arbitrary we can see there must be at two least cells remaining once $\Delta$ is reduced. Thus $\ker\delta$ is trivial and $\delta$ is injective. 

    Now, let $\Delta$ be a reduced $(a_1\ldots a_n,a_1\ldots a_n)$-diagram over $\mathcal{P}$ - we want to show that a diagram equivalent to $\Delta$ can be decomposed as a product of generator diagrams. Consider the cells $\pi_1,\ldots,\pi_m$ of $\Delta$ such that $\lfloor\pi_l\rfloor$ is a subpath of $\lfloor\Delta\rfloor$ in ascending order.

    First notice that, for a given $\pi_l$, if there does not exist a cell $\pi$ such that $\pi_l$ and $\pi$ form a generator diagram in $\Delta$ then $\pi_l$ must be negative. To see this, suppose some $\pi_l$ is a positive cell of the form $(a_i,a_i\ldots a_{j-1})$ without loss and consider its terminal vertex $v$. We can then see from Lemma~\ref{lemma:vertex} that $v$ must be the initial vertex of a negative cell of the form $(a_{i+1}\ldots a_j,a_j)$ with its bottom path being a subpath of $\lfloor\Delta\rfloor$, which means this cell must be $\pi_{l+1}$ and thus $\pi_l$ and $\pi_{l+1}$ form a generator diagram $\beta$ such that $\Delta=\Delta'\circ\beta$.

    Now suppose $\pi_l$ is a cell for which there does not exist a cell $\pi$ such that $\pi_l$ and $\pi$ form a generator diagram in $\Delta$; as noted, it must be negative. Suppose without loss that it has the form $(a_i\ldots a_{j-1},a_i)$ consider its terminal vertex $v$. By Lemma~\ref{lemma:vertex} we can see the label of the first edge in $I(v)$ must be a source and the label of the first edge in $O(v)$ must be a destination, and since $\pi_l$ is negative there, then, must be a positive cell $\pi$ of the form $(a_i,a_i\ldots a_{j-1})$ whose terminal vertex is $v$. We can now introduce a dipole $(\pi',\pi'')$ along the subpath of the bottom path of $\Delta$ labelled $a_{i+1},\ldots,a_j$ to obtain an equivalent diagram $\Bar{\Delta}$ so that $\pi$ and $\pi'$ form a generator diagram while $\pi_l$ and $\pi''$ form a generator diagram $\beta$ such that $\Bar{\Delta}=\Bar{\Delta}'\circ\beta$, and notice $\Bar{\Delta}'$ is reduced.

    Now, as we have seen, if any of the cells $\pi_1,\ldots,\pi_m$ are positive then we can decompose $\Delta=\Delta'\circ\beta$ and $\Delta'$ has fewer cells than $\Delta$. If they are all negative then by Lemma~\ref{lemma:vertex} at least one of them must not be part of a generator diagram in $\Delta$ whence we obtain an equivalent diagram $\Bar{\Delta}$ which decomposes $\Bar{\Delta}=\Bar{\Delta}'\circ\beta$ such that $\Bar{\Delta}'$ has the same number of cells as $\Delta$ but has fewer cells which do not immediately form a generator diagram. This proves the claim and it follows that $\delta$ is surjective.
\end{proof}

\noindent
We have now proved Theorem~\ref{thm:diagram}. A more detailed statement is given below.

\begin{theorem}\label{thm:detailed}
    Let $B=\{b_1,\ldots,b_n\}$ be a geometrically fast set of bumps, let $k$ be the number of isolated bumps in $B$ and let $A_1,\ldots,A_{2n+k}$ be the canonical partition of the support of $B$. Then the fast bump group $\langle B\rangle$ is isomorphic to the diagram group $D(\mathcal{P},A_1\ldots A_{2n+k})$ over the presentation
    \[
    \mathcal{P}=\langle A_1,\ldots,A_{2n+k}\ |\ A_i=A_iA_{i+1}\ldots A_{j-1}, A_j=A_{i+1}\ldots A_{j-1}A_j \text{ for each } b\in B\rangle
    \] where $A_i$ and $A_j$ is the source and destination of each $b\in B$ respectively.
\end{theorem}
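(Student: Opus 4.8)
The plan is to collect the results already established in this section. The assignment $b_i\mapsto\beta_i$ extends to a well-defined homomorphism $\delta\colon\langle B\rangle\to D(\mathcal{P},a_1\ldots a_n)$ by the proposition showing that $\delta$ is a homomorphism, and this map is a bijection by the proposition showing that $\delta$ is an isomorphism; hence $\langle B\rangle\cong D(\mathcal{P},a_1\ldots a_n)$. All that then remains is to translate notation: to see that the word $a_1\ldots a_n$ may be taken to be $A_1\ldots A_{2n+k}$, and that the presentation $\mathcal{P}$ built from $B$ in Section~\ref{sec:IsomorphismBumpsDiagrams} has exactly the displayed form.

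For the bookkeeping I would first recall the structure of the canonical partition. Each of the $n$ bumps contributes its source and its destination, for $2n$ parts; in addition, for each of the $k$ isolated bumps the canonical marking leaves one fundamental-domain ``gap'' between its source and destination, contributing one further part. No other gaps survive, since for a dynamical diagram with no isolated bumps the canonical marking can be chosen so that the feet cover the support up to finitely many points. Thus the canonical partition consists of exactly $2n+k$ intervals, which we list in left-to-right order as $A_1,\ldots,A_{2n+k}$; these are the generators of $\mathcal{P}$, and the top and bottom paths of every generator diagram $\beta_b$ are labelled $A_1\ldots A_{2n+k}$.

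Next I would unwind the relations. Fix $b\in B$ with $\src(b)=A_i$ and $\dest(b)=A_j$. Since the canonical partition refines the partition of the interval by feet and gaps, its portion inside $\supt(b)$ is precisely $A_i,A_{i+1},\ldots,A_{j-1},A_j$; writing $G(b)=A_{i+1}\ldots A_{j-1}$ for the corresponding fundamental domain, the homeomorphisms defining the partition give $(A_i)b=A_iG(b)$ and $(G(b)A_j)b=A_j$. These are exactly the relations $A_i=A_iA_{i+1}\ldots A_{j-1}$ and $A_j=A_{i+1}\ldots A_{j-1}A_j$ attached to $b$, and ranging over all $b\in B$ recovers $\mathcal{P}$ as stated. (When $j=i+1$ the word $G(b)$ is empty and the two relations degenerate to $A_i=A_iA_{i+1}$ and $A_{i+1}=A_iA_{i+1}$, the familiar local picture for an $F$-type overlap.)

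Since the substantive work — well-definedness, injectivity via the dipole-cancellation argument, and surjectivity via Lemma~\ref{lemma:vertex} — is already done, no real obstacle remains here; the only point requiring a little care is the accounting for isolated bumps, namely that each such bump forces exactly one extra part in the canonical partition and hence exactly one extra generator in $\mathcal{P}$, which is what yields the count $2n+k$ rather than $2n$.
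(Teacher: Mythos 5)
Your proposal is correct and follows essentially the same route as the paper, which gives no separate argument for this theorem but records it as the detailed form of what the construction of $\mathcal{P}$ and the generator diagrams $\beta_b$, together with the propositions establishing that $\delta$ is a homomorphism and an isomorphism, have already proved; your accounting of the $2n+k$ parts (two feet per bump plus one fundamental-domain gap per isolated bump) matches the paper's. One small quibble that nothing depends on: the parenthetical about $j=i+1$ is moot, since the fundamental domain between $\src(b)$ and $\dest(b)$ is always nonempty and hence $j\geq i+2$, and the relations written there are garbled --- the $F$-type local picture is $j=i+2$ with relations $A_i=A_iA_{i+1}$ and $A_{i+2}=A_{i+1}A_{i+2}$.
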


\section{$PF_4$ is isomorphic to $F_4$}\label{sec:IsomorphismF4PF4}

We now turn to the $n=4$ case of the isomorphism type question for these groups. Consider a geometrically fast set $B=\{b_1,b_2,b_3,b_4\}$. By Theorem~\ref{thm:PingPong} the group $\langle B\rangle$ is invariant under isomorphism of its dynamical diagram so we may study isomorphism types of geometrically fast bump groups via these objects. Dynamical diagrams are combinatorially nice and easily enumerable; there are $\prod_{i=0}^{n-1}(2i+1)=105$ distinct dynamical diagrams of $n=4$ bumps, and so they generate at most $105$ distinct isomorphism classes of groups.

We can, however, do much better than this since many of these diagrams are decomposable into dynamical diagrams with fewer bumps in a way which respects the isomorphism type of the groups they generate. For example, if $B_1,B_2$ are geometrically fast sets with disjoint support then the dynamical diagram for $B_1\sqcup B_2$ decomposes naturally into the dynamical diagrams for $B_1$ and $B_2$, while $\langle B_1\sqcup B_2\rangle\cong\langle B_1\rangle\times\langle B_2\rangle$. Define an auxiliary graph $G$ to have vertex set $B$ and an edge $\{b_1,b_2\}$ wherever $\supt(b_1)\cap\supt(b_2)\neq\emptyset$ but $\supt(b_1)\not\subseteq\supt(b_2)$ and $\supt(b_2)\not\subseteq\supt(b_1)$. Then we say the dynamical diagram of $B$ is \textbf{irreducible} if $G$ is connected. It can be shown that there are precisely 27 irreducible dynamical diagrams with four edges.

Among these 27 diagrams we can reduce the problem even further. It is well-known that for a group $G$ acting on a set we have $\supt(g^h)=\supt(g)h$. Since $\langle b_1,b_2,b_3,b_4\rangle\cong\langle b_1^{b_2},b_2,b_3,b_4\rangle\cong\langle b_1^{b_3},b_2^{b_4^{-1}},b_3,b_4\rangle\cong\ldots$ and so on, we can use this fact to witness that many distinct dynamical diagrams generate isomorphic groups (so long as the set obtained is still fast). Of the 27 irreducible dynamical diagrams it can be shown that

\begin{itemize}
    \item 17 generate groups isomorphic to $F_4$;
    \item 10 generate another single isomorphism type.
\end{itemize}

\noindent
This unknown isomorphism type has been dubbed pseudo-$F_4$, sometimes called $PF_4$. One of the 10 dynamical diagrams generating $PF_4$ is shown in Figure~\ref{fig:FourBumps}. In what follows we will show that $PF_4\cong F_4$.

\subsection{$F_4$ as a diagram group}

\subsubsection{Standard}

Recall that a \textbf{standard $4$-adic interval} in $[0,1]$ is any  interval of the form
\[
\biggl[\frac{i}{4^n},\frac{i+1}{4^n}\biggr]
\]
where $0\leq i<4^n$.  The standard $4$-adic intervals in $[0,1]$ form a rooted $4$-ary tree under inclusion. There is a natural partial action of $F_4$ on these intervals, where $f\in F_4$ maps an interval $J$ to an interval $J'$ if $f$ is linear on $J$ and $f(J)=J'$. Thus we may represent the elements of $F_4$ using pairs of $4$-ary rooted trees (just as $F$ can be represented using pairs of binary rooted trees \cite{cfp}).

This is closely related to the standard representation of $F_4$ as a diagram group. Guba and Sapir show in \cite{diagrams} that $F_4$ is isomorphic to the diagram group over $\langle x\ |\ x^4=x\rangle$ with base $x$ (in fact any base word will do). As discussed in \cite[p.~1111]{subgroups} any reduced diagram $\Delta$ over this presentation (since it is tree-like) can be decomposed as $\Delta=\Delta_1\circ\Delta_2$ where $\Delta_1$ only contains cells of the form $(x,x^4)$ and $\Delta_2$ only contains cells of the form $(x^4,x)$. If $f\in F_4$ is represented by a reduced diagram $\Delta$ then the strand diagrams of $\Delta_1$ and $\Delta_2^{-1}$ (omitting the top boundary vertex of each) give a pair of binary rooted trees representing $f$. In the other direction, given a pair of binary trees we can horizontally reflect the codomain tree and join the two along their leaves according to the action of $f$ to obtain the interior of a strand diagram where every edge is unlabelled, which is equivalent to being labelled by the same letter $x$, and the diagram of this represents $f$.

\subsubsection{Intervals Types}

The \textbf{type} of a standard $4$-adic interval is the value of $i$ modulo~$3$ (either $0$, $1$, or~$2$).  It is easy to check that if a $4$-adic interval has type $\tau$, then its four children have types $\tau$, $\tau+1$, $\tau+2$, and $\tau$, respectively, where the addition is modulo~$3$.

Note that if $T$ is any finite, rooted subtree of the tree of $4$-adic intervals, then the types of the leaves of $T$ are precisely $0,1,2,0,1,2,0,1,2,\ldots$, with the last leaf having type~$0$.  It follows that if $f\in F_4$ has domain partition $J_1,\ldots, J_n$ and range partition $J_1',\ldots,J_n'$, then each pair of intervals $J_i,J_i'$ have the same type.  That is, the partial action of $F_4$ on the set of standard $4$-adic intervals is type-preserving.

Given this, we may use the connection between pairs of rooted trees and diagrams to find a new semigroup presentation over which $F_4$ is a diagram group. Indeed, given a pair of $4$-ary rooted trees representing an element $f$ we can form the interior of a strand diagram representing $f$ as before, except that instead of having unlabelled edges we label each according to the type of the interval which that edge led to in the rooted tree. 

It follows from this analysis that $F_4$ is isomorphic to the diagram group over the presentation
\[
\langle x_0,x_1,x_2 \mid x_0=x_0x_1x_2x_0,\;x_1=x_1x_2x_0x_1,\;x_2=x_2x_0x_1x_2\rangle
\]
with base word $x_0$.  Indeed, any base word of the form $(x_0x_1x_2)^n x_0$ gives~$F_4$.

\subsubsection{Even More Letters}
One way of looking at the introduction  of the letters $x_0$, $x_1$, and $x_2$ above is that they represent orbits of intervals under the partial action  of $F_4$ on the standard $4$-adic intervals.  Of course, this isn't quite true---although the intervals of type $1$ form a single orbit and the intervals of type $2$ form a single orbit, there are actually \textit{four} orbits of intervals of type~0.  Specifically, the interval $[0,1]$ of type~$0$ is in its own orbit, the remaining intervals that contain~$0$ form an orbit, the remaining intervals that contain~$1$ form an orbit, and the last orbit consists of all intervals of type~$0$ that lie in~$(0,1)$.

This leads to another diagram group representation of $F_4$, where we break the letter $x_0$ into four letters $u_0,v_0,w_0,x_0$ such that the letter $u_0$ represents the whole interval~$[0,1]$, the letter $v_0$ represents other intervals that contain~$0$, the letter $w_0$ represents other intervals that contain~$1$, and the letter $x_0$ represents intervals of type~$0$ that lie in $(0,1)$.  The resulting semigroup presentation is
\[
\bigg\langle u_0,v_0,w_0,x_0,x_1,x_2 \;\biggl|\; \begin{array}{l}u_0=v_0x_1x_2w_0,\; v_0=v_0x_1x_2x_0,\; w_0=x_0x_1x_2w_0, \\[2pt] x_0=x_0x_1x_2x_0,\; x_1=x_1x_2x_0x_1,\; x_2=x_2x_0x_1x_2
\end{array}\bigg\rangle
\]
It is not hard to show that the diagram group over this presentation with base word~$u_0$ is isomorphic to~$F_4$.  The base word $v_0x_1x_2w_0$ also suffices, in which case we can remove the generator $u_0$ and the relation \mbox{$u_0=v_0x_1x_2w_0$} from the presentation without affecting the isomorphism type by Theorem~\ref{thm:moves}.

\subsection{The Proof}

We now prove that pseudo-$F_4$ is isomorphic to $F_4$.  First we label the eight feet of the bumps using the eight letters $A$, $B$, $C$, $D$, $\overline{A}$, $\overline{B}$, $\overline{C}$, and $\overline{D}$, as shown in Figure~\ref{fig:FourBumpsLabeled}.  According to Theorem~\ref{thm:detailed}, pseudo-$F_4$ is isomorphic to the diagram group for the presentation
\[
\bigg\langle \begin{array}{l}A,B,C,D,\\[2pt] \overline{A},\overline{B},\overline{C},\overline{D}\end{array} \;\biggl|\; \begin{array}{l}A=ABCD,\; B=BCD\overline{DC},\; C=CD\overline{D}, \;D=D\overline{DCB} \\[2pt] \overline{A}=\overline{DCBA},\; \overline{B}=CD\overline{DCB},\; \overline{C}=D\overline{DC},\;\overline{D}=BCD\overline{D}
\end{array}\bigg\rangle
\]
\begin{figure}
    \centering
    \includegraphics{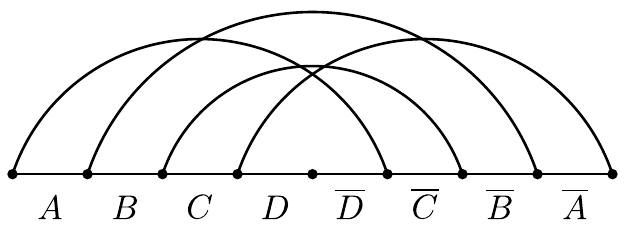}
    \caption{Labels for the eight feet of the bumps that generate pseudo-$F_4$.}
    \label{fig:FourBumpsLabeled}
\end{figure}%
with base word $ABCD\overline{DCBA}$.  Note that this presentation is symmetric with respect to the operation of: \begin{enumerate}
    \item Switching the pairs $(A,\overline{A})$, $(B,\overline{B})$, $(C,\overline{C})$, and $(D,\overline{D})$ and then\smallskip
    \item Reversing the order of the generators in every word.
\end{enumerate}
We now apply a sequence of Guba and Sapir's moves from Theorem~\ref{thm:moves} to this presentation to obtain further diagram groups that are isomorphic to pseudo-$F_4$.  First, since the last relation is $\mbox{$\overline{D}=BCD\overline{D}$}$, we can replace the $BCD\overline{D}$ in the relation $B=BCD\overline{DC}$ by a~$\overline{D}$, and similarly we can replace the $D\overline{DCB}$ in the relation $\overline{B}=CD\overline{DCB}$ by a~$D$. This yields the semigroup presentation
\[
\bigg\langle \begin{array}{l}A,B,C,D,\\[2pt] \overline{A},\overline{B},\overline{C},\overline{D}\end{array} \;\biggl|\; \begin{array}{l}A=ABCD,\; B=\overline{DC},\; C=CD\overline{D}, \;D=D\overline{DCB} \\[2pt] \overline{A}=\overline{DCBA},\; \overline{B}=CD,\; \overline{C}=D\overline{DC},\;\overline{D}=BCD\overline{D}
\end{array}\bigg\rangle
\]
Next we replace $D\overline{DC}$ by $\overline{C}$ in the relation $D=D\overline{DCB}$, and we replace $CD\overline{D}$ by $C$ in the relation $\overline{D}=BCD\overline{D}$, yielding the presentation
\[
\bigg\langle \begin{array}{l}A,B,C,D,\\[2pt] \overline{A},\overline{B},\overline{C},\overline{D}\end{array} \;\biggl|\; \begin{array}{l}A=ABCD,\; B=\overline{DC},\; C=CD\overline{D}, \;D=\overline{CB} \\[2pt] \overline{A}=\overline{DCBA},\; \overline{B}=CD,\; \overline{C}=D\overline{DC},\;\overline{D}=BC
\end{array}\bigg\rangle
\]
Next we replace $CD$ by $\overline{B}$ in the relation $C=CD\overline{D}$, and we replace $\overline{DC}$ by $B$ in the relation $\overline{C}=D\overline{DC}$, yielding the presentation
\[
\bigg\langle \begin{array}{l}A,B,C,D,\\[2pt] \overline{A},\overline{B},\overline{C},\overline{D}\end{array} \;\biggl|\; \begin{array}{l}A=ABCD,\; B=\overline{DC},\; C=\overline{BD}, \;D=\overline{CB} \\[2pt] \overline{A}=\overline{DCBA},\; \overline{B}=CD,\; \overline{C}=DB,\;\overline{D}=BC
\end{array}\bigg\rangle.
\]
By Guba and Sapir's theorem, the diagram group over this presentation with base word $ABCD\overline{DCBA}$ remains isomorphic to pseudo-$F_4$.

Next we break the symmetry by substituting $CD$ for $\overline{B}$, substituting $DB$ for $\overline{C}$, and substituting $BC$ for $\overline{D}$ in all of the other relations as well as in the base word.  This yields the semigroup presentation
\[
\bigg\langle \begin{array}{l}A,B,C,D,\\[2pt] \overline{A},\overline{B},\overline{C},\overline{D}\end{array} \;\biggl|\; \begin{array}{l}A=ABCD,\; B=BCDB,\; C=CDBC, \;D=DBCD \\[2pt] \overline{A}=BCDBCD\overline{A},\; \overline{B}=CD,\; \overline{C}=DB,\;\overline{D}=BC
\end{array}\bigg\rangle
\]
with base word $ABCDBCDBCD\overline{A}$.  We can now eliminate the generators $\overline{B}$, $\overline{C}$, and $\overline{D}$ as well as the corresponding relations to get the semigroup presentation
\[
\bigg\langle A,B,C,D,\overline{A} \;\biggl|\; \begin{array}{l}A=ABCD,\; B=BCDB,\; C=CDBC, \;D=DBCD \\[2pt] \overline{A}=BCDBCD\overline{A}
\end{array}\bigg\rangle
\]
with base word $ABCDBCDBCD\overline{A}$.  If we replace $BCDB$ by $B$ in the relation for $\overline{A}$ as well as twice successively in the base word, we obtain the semigroup presentation
\[
\bigg\langle A,B,C,D,\overline{A} \;\biggl|\; \begin{array}{l}A=ABCD,\; B=BCDB,\; C=CDBC, \;D=DBCD \\[2pt] \overline{A}=BCD\overline{A}
\end{array}\bigg\rangle
\]
with base word $ABCD\overline{A}$, and the corresponding diagram group remains isomorphic to pseudo-$F_4$. 

This is almost the same as the diagram group representation of $F_4$ discussed above.  To finish, we introduce a new generator $E$ to the presentation:
\[
\bigg\langle A,B,C,D,\overline{A},E \;\biggl|\; \begin{array}{l}A=ABCD,\; B=BCDB,\; C=CDBC, \;D=DBCD \\[2pt] \overline{A}=BCD\overline{A},\; E=D\overline{A}
\end{array}\bigg\rangle
\]
Next we substitute $E$ for $D\overline{A}$ in the relation for $\overline{A}$:
\[
\bigg\langle A,B,C,D,\overline{A},E \;\biggl|\; \begin{array}{l}A=ABCD,\; B=BCDB,\; C=CDBC, \;D=DBCD \\[2pt] \overline{A}=BCE,\; E=D\overline{A}
\end{array}\bigg\rangle
\]
and similarly in the base word to get the new base word $ABCE$.
Next we substitute $BCE$ for $\overline{A}$ in the relation for~$E$:
\[
\bigg\langle A,B,C,D,\overline{A},E \;\biggl|\; \begin{array}{l}A=ABCD,\; B=BCDB,\; C=CDBC, \;D=DBCD \\[2pt] \overline{A}=BCE,\; E=DBCE
\end{array}\bigg\rangle
\]
which allows use to eliminate the generator $\overline{A}$ and its corresponding relation.  The result is the presentation
\[
\bigg\langle A,B,C,D,E \;\biggl|\; \begin{array}{l}A=ABCD,\; B=BCDB,\; C=CDBC, \;D=DBCD \\[2pt] E=DBCE
\end{array}\bigg\rangle
\]
with base word $ABCE$.  This is exactly the diagram group representation for $F_4$ discussed above, with $v_0=A$, $w_0=E$, $x_0=D$, $x_1=B$, and $x_2=C$, so we conclude that pseudo-$F_4$ is isomorphic to~$F_4$.

\printbibliography

\end{document}